\title{Monoidal Width: Capturing Rank Width}
\author{Elena Di Lavore\institute{Tallinn University of Technology} \and Pawe{ł} {Sobociński}\institute{Tallinn University of Technology}}
\begin{document}
\maketitle

\begin{abstract}
  Monoidal width was recently introduced by the authors as a measure of the complexity of decomposing morphisms in monoidal categories. We have shown that in a monoidal category of cospans of graphs, monoidal width and its variants can be used to capture tree width, path width and branch width.
  In this paper we study monoidal width in a category of matrices, and in an extension to a \emph{different} monoidal category of open graphs, where the connectivity information is handled with matrix algebra and graphs are composed along edges instead of vertices. We show that here monoidal width captures rank width: a measure of graph complexity that has received much attention in recent years.
  %
\end{abstract}

\section{Introduction}
Many applications of category theory rely on monoidal categories as algebras of processes~\cite{fritz2020,cho2019,GhicaJL17,Comfort2021,Boisseau2021,fong2018seven,Bonchi0Z21,Coecke2017,DuncanKPW20,GhaniHWZ18}.
Morphisms are compound processes, defined as parallel and sequential compositions of simpler process components.
The compositional nature of this modelling allows a compositional computation of the underlying semantics.
But how efficient is this computation?
Given two processes \(f\) and \(g\), we can compute their semantics separately.
However, computing the semantics of their sequential composition \(f \dcomp g\) often requires an additional cost~\cite{rathke2014compositional}. Indeed, the semantics of sequential composition often means resource sharing or synchronisation along the common boundary. This in turn carries a computational burden, dependent on the size of the boundary.
On the other hand, computing the semantics of a parallel composition \(f \tensor f'\) typically does not involve any resource sharing, as indicated by the wiring of the string diagrams, and thus typically does not require significant additional computational resources.
Taking this into account, the choice of the \emph{recipe} for a morphism in terms of parallel and sequential compositions influences the cost of computing its semantics.
As shown in \Cref{fig:ex-cuts}, where vertical cuts represent sequential compositions and horizontal cuts represent parallel compositions, the same morphism can be defined in different ways with possibly different computational costs.
\begin{figure}[h!]
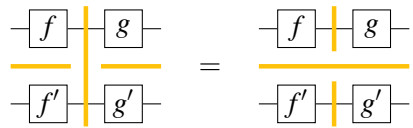

  \[\interchangelawDecFig\]
  \caption{Two monoidal decompositions of the same morphism, the right one being the cheapest.}\label{fig:ex-cuts}
\end{figure}
Given a morphism, it is thefore desirable to find the least costly recipe of \emph{decomposing} it in terms of more primitive components. We can rephrase the original question:
what is the most efficient way to decompose a morphism in a monoidal category?

The authors recently proposed \emph{monoidal width}~\cite{2022monoidalwidth} as a way of assigning a natural number to a morphism of a monoidal category, representing -- roughly speaking -- the cost of its most efficient decomposition.
In turn, this is related to the cost of computing the semantics of this morphism.

Computing efficient decompositions is not a new problem.
The graph theory literature abounds~\cite{bertele1973treewidth,halin1976treewidth,robertson1986graph-minorsII,robertson1983graph-minorsI,robertson1991graph-minorsX,oum2006rank-width,courcelle2000upper-clique,adolphson1973optimal,aharonov2006quantum,chudnovsky2011well} with notions of complexity of graphs that ultimately measure the difficulty of decomposing a graph into smaller components by cutting along the vertices or the edges of the graph.
Measures such as tree width~\cite{bertele1973treewidth,halin1976treewidth,robertson1986graph-minorsII}, path width~\cite{robertson1983graph-minorsI}, branch width~\cite{robertson1991graph-minorsX}, clique width~\cite{courcelle2000upper-clique} and rank width~\cite{oum2006rank-width} are motivated by algorithmic considerations.
Probably the best known among several results that etablish links with algorithms~\cite{bodlaender1992tourist,bodlaender2008combinatorial,courcelle1990monadic}, the following shows the importance of tree width.
\begin{theorem*}[Courcelle~\cite{courcelle1990monadic}]
  Every property expressible in the monadic second order logic of graphs can be tested in linear time on graphs with bounded tree width.
\end{theorem*}
The different notions of complexity for graphs vastly differ in low-level ``implementation details'' but they all share a similar underlying idea: that of defining decompositions and suitably measuring their efficiency.
One of our contributions is to exhibit monoidal width as a unifying framework for graph measures based on a notion of decomposition.
In fact, by choosing a suitable algebra of composition for graphs --- i.e.\ choosing the right monoidal cateory --- we recover some of these known measures as particular instances of monoidal width. We have previously captured~\cite{2022monoidalwidth} tree width, path width and branch width by instantiating monoidal width and two variants in a category of cospans of graphs.

\medskip
In this paper we focus on rank width~\cite{oum2006rank-width} -- a relatively recent development that has attracted significant attention in the graph theory community. A rank decomposition is a recipe for decomposing a graph into its single-vertex subgraphs by cutting along its edges. The cost of a cut is the rank of the adjacency matrix that represents it, as shown in \Cref{fig:ex-rank-cut}. A useful intuition for rank width is that it is a kind of \virgolette{Kolmogorov complexity} for graphs.
For example, although the family of cliques has unbounded tree width, the connectivity of cliques is quite simple to describe: and, in fact,  all cliques have rank width $1$.
\begin{figure}[h!]
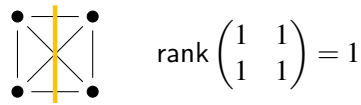

  \[\cutrankExFig{}\]
  \caption{A cut and its matrix in a rank decomposition.}\label{fig:ex-rank-cut}
\end{figure}

To capture rank width as an instance of monoidal width, rather than taking cospans, we work in a different monoidal category of graphs. First introduced in~\cite{chantawibul2015compositionalgraphtheory}, it was recently used~\cite{networkGamesCSL} as a syntax for network games. This approach to computing with ``open graphs'' is more linear algebraic, building modularly on the theory of bialgebra, well known to be closely related to matrix algebra~\cite{zanasi2015thesis}. Indeed, the connectivity of graphs is handled with adjacency matrices, and boundary connections are matrices.

%

\paragraph*{Related work.}
This manuscript, although self-contained, complements our previous work~\cite{2022monoidalwidth}, where we considered tree width, path width and branch width as instances of monoidal width.

Previous syntactical approaches to graph widths are the work of Pudl{\'a}k, R{\"o}dl and Savick{\`y}~\cite{pudlak1988graph-complexity} and the work of Bauderon and Courcelle~\cite{bauderon1987graph}.
Their works consider different notions of graph decompositions, which lead to different notions of graph complexity.
In particular, in~\cite{bauderon1987graph}, the cost of a decomposition is measured by counting \emph{shared names}, which is clearly closely related to penalising sequential composition as in monoidal width.
Nevertheless, these approaches are specific to particular, concrete notions of graphs, whereas our work concerns the more general algebraic framework of monoidal categories.

Recent abstract approaches focus on other graph widths.
The work of Blume et.\ al.~\cite{blume2011treewidth}, characterises tree and path decompositions in terms of colimits.
Abramsky et.\ al.~\cite{feder1998computational} give a coalgebraic characterization of  tree width of relational structures (and graphs in particular).
Bumpus and Kocsis~\cite{bumpus2021spined} also generalise tree width to the categorical setting, although their approach is far removed from ours.

\paragraph*{Synopsis.}
Monoidal width is recalled in \Cref{sec:monoidal-width}.
In \Cref{sec:mwd-matrices}, we study the monoidal width of matrices.
\Cref{sec:rank-width} recalls rank width and gives an equivalent recursive definition of it that will be useful as an intermediate step towards our main result, which is presented in \Cref{sec:mwd-rwd}. 

\paragraph*{Preliminaries.}
We use string diagrams~\cite{joyal1991geometry,selinger2010survey}:
sequential and parallel compositions of \(f\) and \(g\) are drawn as in \Cref{fig:string-diagrams}, left and middle, respectively.
Much of the bureaucracy, e.g. the interchange law \((f \dcomp g) \tensor (f' \dcomp g') = (f \tensor f') \dcomp (g \tensor g')\), disappears (\Cref{fig:string-diagrams}, right).
\begin{figure}[h!]
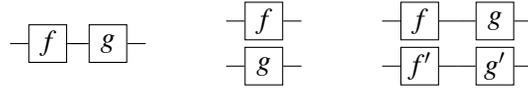

  \centering
  \sequentialFig{} \qquad \parallelFig{} \qquad \interchangelawFig{}
  \caption{String diagrammatic notation.}\label{fig:string-diagrams}
\end{figure}
\emph{Props}~\cite{MacLane1965,Lack2004a} are important examples of monoidal categories.
They are symmetric strict monoidal, with natural numbers as objects, and addition as monoidal product on objects. Roughly speaking, morphisms can be thought of as processes, and the objects (natural numbers) keep track of the number of inputs or outputs of a process.

\section{Monoidal width}\label{sec:monoidal-width}
This section recalls the concept of monoidal width from~\cite{2022monoidalwidth}.
Monoidal width records the cost of the most efficient way one can decompose a morphism into its atomic components, thus capturing---roughly speaking---its intrinsic structural complexity.
A decomposition is a binary tree whose internal nodes are labelled with compositions or monoidal products, and whose leaves are labelled with atomic morphisms.
\begin{definition}[Monoidal decomposition~\cite{2022monoidalwidth}]\label{defn:monoidalDecomposition}
  Let \(\cat{C}\) be a monoidal category and \(\decgenerators\) be a subset of its morphisms referred to as \emph{atomic}.
  The set $\decset{f}$ of \emph{monoidal decompositions} of \(f \colon A \to B\) in $\cat{C}$ is defined:
  \begin{align*}
  \decset{f} \quad \Coloneqq \quad & \leafgenerator{f} &\text{if } f \in \decgenerators \\
                      \mid \quad & \nodegenerator{d_{1}}{\tensor}{d_{2}} &\text{if } d_1\in \decset{f_1},\,d_2\in \decset{f_2} \text{ and } f = f_1\tensor f_2 \\
                      \mid \quad & \nodegenerator{d_{1}}{\dcomp_{X}}{d_{2}} &\text{if }d_1\in \decset{f_1 \colon A\to X},\,d_2\in \decset{f_2 \colon X\to B} \text{ and }f = f_1\dcomp f_2
  \end{align*}
\end{definition}
The cost of a decomposition depends on the operations and atoms present:
each operation and each atomic morphism is associated with a cost, which we call weight.
Roughly speaking, sequential composition is priced according to the size of the object the composition occurs over, while monoidal products are free.
Finally, the weight of an atom is the application-specific cost of computing its semantics.
\begin{definition}[Weight function~\cite{2022monoidalwidth}]
  Let \(\cat{C}\) be a monoidal category and let \(\decgenerators\) be a set of atoms for \(\cat{C}\).
  A weight function for \((\cat{C},\decgenerators)\) is a function \(\nodeweight \colon \decgenerators \union \monoidaloperations{\cat{C}} \to \naturals\) such that
  \begin{itemize}
    \item \(\nodeweight(X \tensor Y) = \nodeweight(X) + \nodeweight(Y)\),
    \item \(\nodeweight(\tensor) = 0\).
  \end{itemize}
\end{definition}

\begin{example}\label{ex:mon-dec}
  Let \(\cp_{1} \colon 1 \to 2\) and \(\cocp_{1} \colon 2 \to 1\) be the diagonal and codiagonal morphisms in a cartesian and cocartesian prop\footnote{In a \emph{cartesian prop} the $\tensor$ satisfies the universal property of  products. Dually, in a \emph{cocartesian prop}, the \(\tensor\) satisfies the universal property of the coproduct.} s.t.\ $\nodeweight(\cp_{1})=\nodeweight(\cocp_{1})=2$.
  The following figure represents the monoidal decomposition of \(\cp \dcomp (\cp \tensor \id{}) \dcomp (\cocp \tensor \id{}) \dcomp \cocp\) given by
  \( \nodegenerator{\cp}{\dcomp_{2}}{\nodegenerator{\nodegenerator{\nodegenerator{\cp}{\dcomp_{2}}{\cocp}}{\tensor}{\id{}}}{\dcomp_{2}}{\cocp}} \).
  \begin{center}
    \monoidaldecThreeExFig{}
  \end{center}
\end{example}

The width of a decomposition is the cost of the most expensive node in the decomposition tree.
\begin{definition}[Width of a monoidal decomposition~\cite{2022monoidalwidth}]\label{defn:decompositionWidth}
  Let \(\nodeweight\) be a weight function for \((\cat{C},\decgenerators)\).
  Let $f$ be in $\cat{C}$ and $d\in \decset{f}$.
  The width of $d$ is defined recursively as follows:
  \begin{align*}\label{eq:def-width-mon-dec}
    \decwidth(d) \defn\ &  \nodeweight(f) & \text{if }d=\leafgenerator{f} \\
                        &  \max \{\decwidth(d_1),\decwidth(d_2)\} & \text{if }d= \nodegenerator{d_{1}}{\tensor}{d_{2}}\\
                        &  \max \{\decwidth(d_1),\,\nodeweight(X),\,\decwidth(d_2)\} & \text{if } d= \nodegenerator{d_{1}}{\dcomp_{X}}{d_{2}}
  \end{align*}
\end{definition}
As sketched in~\Cref{ex:mon-dec}, decompositions can be seen as labelled trees $(S,\mu)$ where $S$ is a tree  and $\mu:\vertices(S)\to \decgenerators \union \monoidaloperations{\cat{C}}$ is a labelling function. With this we can restate the width as:
\[
\decwidth(d)=\decwidth(S,\mlabelling) \defn \max_{v \in \vertices(S)} \nodeweight(\mlabelling(v))
\]
which may be familiar to those aquainted with graph widths.

\smallskip
Monoidal width is simply the width of the cheapest decomposition.
\begin{definition}[Monoidal width~\cite{2022monoidalwidth}]\label{defn:monoidalWidth}
  Let \(\nodeweight\) be a weight function for \((\cat{C},\decgenerators)\)
  and $f$ be in $\cat{C}$. Then the \emph{monoidal width} of $f$ is
  \( \mwd(f) \defn \min_{d\in \decset{f}} \decwidth(d)\).
\end{definition}

\begin{example}\label{ex:mwd-number-like-morphisms}
  With the data of~\Cref{ex:mon-dec}, define a family of morphisms \(n \colon 1 \to 1\) inductively:\\
  \begin{minipage}{0.49\textwidth}
    \begin{itemize}
      \item \(1 \defn \id{1}\);
      \item \(2 \defn \cp \dcomp_{2} \cocp\);
      \item \(n+1 \defn \cp \dcomp_{2} (n \tensor 1) \dcomp_{2} \cocp\) for \(n \geq 2\).
    \end{itemize}
  \end{minipage}
  \begin{minipage}{0.49\textwidth}
    \begin{center}
      \mondecnumbersFig{}
    \end{center}
  \end{minipage}\\
  Each \(n\) has a monoidal decomposition of width \(n\): the root node is the composition along the \(n\) wires in the middle.
  However,  \(\mwd(n)=2\) for any \(n\), with an optimal  decomposition shown above.
\end{example}


\subsection{The width of copying}

Before we begin with the original technical contributions of this paper in Section~\ref{sec:mwd-matrices}, we need to recall a technical result from~\cite{2022monoidalwidth} about decomposing copy morphisms.
%
We consider symmetric monoidal categories equipped with such morphisms and show that copying \(n\) wires costs at most \(n+1\).
\begin{definition}[Copying]
  Let \(\cat{X}\) be a symmetric monoidal category with symmetries given by \(\swap{X,Y}\).
  We say that \(\cat{X}\) has \emph{coherent copying} if there is a class of objects $\mathcal{C}_\cat{X}\subseteq \obj{\cat{X}}$,
  satifying $X,Y\in \mathcal{C}_\cat{X}$ iff $X\tensor Y \in \mathcal{C}_{X}$, such that
  every \(X\) in \(\mathcal{C}_\cat{X}\)
   is endowed with a morphism \(\cp_{X} \colon X \to X \tensor X\).
   Moreover, \(\cp_{X \tensor Y} = (\cp_X \tensor \cp_Y) \dcomp (\id{X} \tensor \swap{X,Y} \tensor \id{Y})\) for every \(X,Y \in \mathcal{C}_\cat{X}\).
\end{definition}
An example is any cartesian prop with \(\cp_{n} \colon n \to n + n\) given by the cartesian structure.
We take \(\cp_{X}\), the symmetries \(\swap{X,Y}\) and the identities \(\id{X}\) as atomic for all objects \(X, Y\), i.e. the set of atomic morphisms is \(\decgenerators = \{\cp_{X},\, \swap{X,Y},\, \id{X} \ :\  X, Y \in \mathcal{C}_\cat{X}\}\).
The weight function is \(\nodeweight(\cp_{X}) \defn 2 \cdot \nodeweight(X)\), \(\nodeweight(\swap{X,Y}) \defn \nodeweight(X) + \nodeweight(Y)\) and \(\nodeweight(\id{X}) \defn \nodeweight(X)\).
In a prop, we take \(\nodeweight(n) \defn n\).
Note that $\nodeweight(\cp_{X\tensor Y}) = 2 \cdot \nodeweight(X\tensor Y) =
2\cdot (\nodeweight(X) + \nodeweight(Y))$, but utilising coherence we can do better, as illustrated below.

\begin{example}\label{ex:mwd-copy}
  Let \(\cat{C}\) be a prop with coherent copying and consider \(\cp_{n} \colon n \to 2n\).
  Let \(\gamma_{n,m} \defn (\cp_{n} \tensor \id{m}) \dcomp (\id{n} \tensor \swap{n,m}) \colon n + m \to n + m + n\).
  We can decompose \(\gamma_{n,m}\) in terms of \(\gamma_{n-1,m+1}\) (in the dashed box), \(\cp_{1}\) and \(\swap{1,1}\) by cutting along at most \(n+1+m\) wires:
  \begin{center}
    \mwdCopyExFig{}
  \end{center}
  This allows us to decompose \(\cp_{n} = \gamma_{n,0}\) cutting along at most \(n+1\) wires. In particular, $\mwd(\cp_n) \leq n+1$.
\end{example}
The following result is a technical generalisation of the argument presented in \Cref{ex:mwd-copy}.
\begin{lemma}[\cite{2022monoidalwidth}]\label{lemma:mwd-copy}
  Let \(\cat{X}\) be a symmetric monoidal category with coherent copying.
  Suppose that \(\decgenerators\) contains \(\cp_X\) for \(X \in \mathcal{C}_{\cat{X}}\), and \(\swap{X,Y}\) and \(\id{X}\) for \(X \in \obj{\cat{X}}\). Let $\overline{X}\defn X_1 \tensor \cdots \tensor X_n$,
  \(f \colon Y \tensor \overline{X} \tensor Z \to W\) and let \(d \in \decset{f}\).
  Let \(\gamma(f) \defn (\id{Y} \tensor \cp_{\overline{X}} \tensor \id{Z}) \dcomp (\id{Y \tensor \overline{X}} \tensor \swap{\overline{X}, Z}) \dcomp (f \tensor \id{\overline{X}})\).
  \begin{center}
    \(\gamma(f) \,\defn\quad\)\lemmamwdcopyStateFig{}
  \end{center}
  There is a decomposition \(\copyMdec(d)\) of \(\gamma(f)\) of bounded width: 
  \[\decwidth(\copyMdec(d)) \leq \max \{\decwidth(d), \nodeweight(Y) + \nodeweight(Z) + (n+1) \cdot \max_{i = 1,\ldots,n} \nodeweight(X_i)\}.\]
\end{lemma}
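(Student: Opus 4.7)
The plan is to induct on $n$, the length of $\overline{X} = X_1 \tensor \cdots \tensor X_n$, following the pattern of \Cref{ex:mwd-copy}. For the base case $n = 0$, $\overline{X}$ is the monoidal unit, so $\gamma(f) = f$; taking $\copyMdec(d) \defn d$ immediately yields the required bound, since $\decwidth(d) \leq \max\{\decwidth(d),\nodeweight(Y)+\nodeweight(Z)\}$.

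For the inductive step ($n \geq 1$), I would apply the coherent-copying axiom to factor
\[ \cp_{\overline{X}} \;=\; (\cp_{X_1} \tensor \cp_{\overline{X'}}) \dcomp (\id{X_1} \tensor \swap{X_1,\overline{X'}} \tensor \id{\overline{X'}}), \]
where $\overline{X'} \defn X_2 \tensor \cdots \tensor X_n$. Substituting into the definition of $\gamma(f)$ and using naturality of symmetries, one can rewrite
\[ \gamma(f) \;=\; (\id{Y} \tensor \cp_{X_1} \tensor \id{\overline{X'} \tensor Z}) \dcomp \sigma_1 \dcomp (\gamma'(f) \tensor \id{X_1}) \dcomp \sigma_2, \]
where $\gamma'(f)$ is the $\gamma$-construction of the lemma applied to $f$ re-bracketed as $(Y \tensor X_1) \tensor \overline{X'} \tensor Z \to W$, and $\sigma_1,\sigma_2$ are built from symmetries involving $X_1$, $\overline{X'}$, and $Z$. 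By the inductive hypothesis applied to this smaller instance, there is a decomposition $\copyMdec'(d)$ of $\gamma'(f)$ whose width is at most
\[ w' \;\defn\; \max\bigl\{\decwidth(d),\ \nodeweight(Y) + \nodeweight(X_1) + \nodeweight(Z) + n \cdot \max\nolimits_{i \geq 2} \nodeweight(X_i)\bigr\}. \]
I would then assemble $\copyMdec(d)$ by sequentially composing $\copyMdec'(d)$ with leaf decompositions for $\cp_{X_1}$, $\id{X_1}$, and the swap atoms comprising $\sigma_1$ and $\sigma_2$.

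The main obstacle is the width analysis: every intermediate cut object must be checked against the claimed bound. The outer sequential cuts introduced above occur at objects of the form $Y \tensor \overline{X'} \tensor Z$ tensored with at most two copies of $X_1$, whose weight is bounded by $\nodeweight(Y) + \nodeweight(Z) + 2\nodeweight(X_1) + (n-1)\max_{i \geq 2}\nodeweight(X_i) \leq \nodeweight(Y) + \nodeweight(Z) + (n+1)\max_i \nodeweight(X_i)$. Since $w'$ is itself bounded by the same quantity, combining gives $\decwidth(\copyMdec(d)) \leq \max\{\decwidth(d),\nodeweight(Y) + \nodeweight(Z) + (n+1)\max_i \nodeweight(X_i)\}$, as required. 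The remaining bureaucracy is essentially routine: one must verify the rewriting of $\gamma(f)$ from the coherence factorisation, and check that each symmetry in $\sigma_1, \sigma_2$ is evaluated at an object whose weight fits within the same bound.
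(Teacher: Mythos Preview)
Your inductive strategy is the one sketched in \Cref{ex:mwd-copy} (the paper does not prove the lemma in full, citing~\cite{2022monoidalwidth}), but there is a real gap in the width bound, caused by the direction in which you peel.

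By absorbing $X_1$ into $Y$, the recursive call $\gamma'(f)$ has codomain $W \tensor \overline{X'}$, so $\gamma'(f) \tensor \id{X_1}$ has codomain $W \tensor \overline{X'} \tensor X_1$, whereas $\gamma(f)$ must land in $W \tensor X_1 \tensor \overline{X'}$. Hence $\sigma_2$ necessarily contains $\id{W} \tensor \swap{\overline{X'},X_1}$, and the composition node joining $(\gamma'(f) \tensor \id{X_1})$ to $\sigma_2$ is along the object $W \tensor \overline{X'} \tensor X_1$, of weight $\nodeweight(W) + \nodeweight(\overline{X'}) + \nodeweight(X_1)$. Since $\nodeweight(W)$ does not appear in the lemma's bound and can be arbitrarily large, your claim that ``the outer sequential cuts \ldots\ occur at objects of the form $Y \tensor \overline{X'} \tensor Z$ tensored with at most two copies of $X_1$'' is false for this particular cut, and the argument does not close.

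The repair is to peel from the other end: set $\overline{X''} \defn X_1 \tensor \cdots \tensor X_{n-1}$ and absorb $X_n$ into $Z$. This is precisely what \Cref{ex:mwd-copy} does, decomposing $\gamma_{n,m}$ via $\gamma_{n-1,m+1}$ (the $Z$-side grows). One then obtains
\[
\gamma(f) \;=\; (\id{Y \tensor \overline{X''}} \tensor \cp_{X_n} \tensor \id{Z}) \dcomp (\id{Y \tensor \overline{X''} \tensor X_n} \tensor \swap{X_n,Z}) \dcomp (\gamma''(f) \tensor \id{X_n}),
\]
whose right-hand factor already has codomain $W \tensor \overline{X''} \tensor X_n = W \tensor \overline{X}$. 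No output-side permutation is needed, every cut lies over $Y \tensor \overline{X''} \tensor Z$ tensored with at most two copies of $X_n$, and your width calculation then goes through unchanged.
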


\section{Monoidal width in matrices}\label{sec:mwd-matrices}
\begin{figure*}[h!]
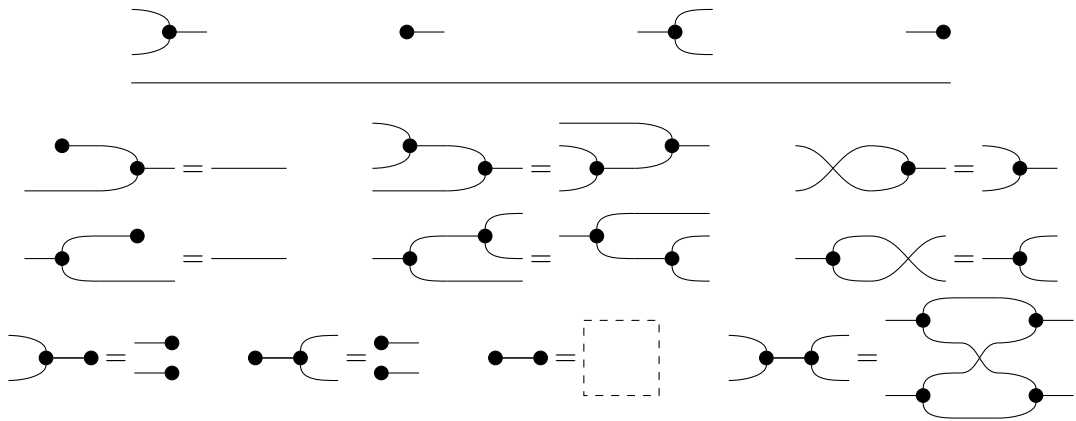

  \centering
  \bialgebraFig{}
  \caption{Bialgebra axioms}\label{fig:bialgebra}
\end{figure*}

Given the ubiquity of matrix algebra, matrices are an obvious case study.
\Cref{th:mwd-matrices} shows that the monoidal width of a matrix is, up to \(1\), the maximum of the ranks of its blocks.

Consider the monoidal category \(\Mat_{\naturals}\) of matrices with entries in the natural numbers.
The objects are natural numbers and morphisms from \(n\) to \(m\) are \(m\) by \(n\) matrices.
Composition is the usual product of matrices and the monoidal product is the biproduct: \(A \tensor B \defn \begin{psmallmatrix}A & \zeromat \\ \zeromat & B \end{psmallmatrix}\).
Let us examine matrix decompositions enabled by this algebra.
A matrix \(A\) can be written as a monoidal product \(A = A_{1} \tensor A_{2}\) iff the matrix has blocks \(A_{1}\) and \(A_{2}\), i.e. \(A = \begin{psmallmatrix} A_{1} & \zeromat \\ \zeromat & A_{2}\end{psmallmatrix}\).
On the other hand, a composition is related to the rank.
\begin{lemma}[\cite{piziak1999fullRank}]\label{lemma:min-cut-is-rank}
  Let \(A \colon n \to m\) in $\Mat_{\naturals}$.
  Then \(\min\{k \in \naturals : A = B \dcomp_k C\} = \rank(A)\).
\end{lemma}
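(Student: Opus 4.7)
My plan is to prove the two inequalities separately.

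For the direction $\min_k \{k : A = B \dcomp_k C\} \geq \rank(A)$, I would appeal to submultiplicativity of matrix rank: in any factorisation $A = B \dcomp_k C$, each of $B$ and $C$ has one dimension equal to $k$ and hence rank at most $k$, so $\rank(A) = \rank(B \cdot C) \leq \min(\rank(B), \rank(C)) \leq k$. Taking the minimum over all such factorisations gives the bound.

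For the reverse inequality, I would exhibit an explicit factorisation realising $r = \rank(A)$. Choose $r$ linearly independent columns of $A$ spanning its column space and gather them into an $m \times r$ matrix $B$; then express every column of $A$ as a unique combination of these, recording the coefficients in an $r \times n$ matrix $C$. Their product recovers $A$, giving the required composition through an object of size $r$. This is the classical full-rank factorisation cited in~\cite{piziak1999fullRank}.

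The point that requires the most care is that $\Mat_{\naturals}$ restricts entries to natural numbers, whereas in general the nonnegative rank of a matrix can strictly exceed its rank over a field. I would need to check that the standard factorisation above lives in $\Mat_{\naturals}$ for the matrices of interest; for the graph-theoretic applications that motivate the paper, one can take the basis columns to be columns of $A$ itself, which makes the coefficient matrix $\{0,1\}$-valued and so still in $\naturals$. This semiring consideration is the main subtle issue; once it is dispensed with, the lemma reduces to a direct application of standard linear algebra.
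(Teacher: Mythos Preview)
The paper does not give its own proof; it imports the statement from \cite{piziak1999fullRank}, which treats full-rank factorisation over a field. Your two-inequality argument is exactly the standard one in that setting, and the direction $\rank(A)\leq k$ for every factorisation is correct as you wrote it.

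The semiring issue you flag is genuine, but your proposed fix does not close it. Choosing basis columns from $A$ does not make the coefficient matrix $\{0,1\}$-valued, nor even $\naturals$-valued: for the $1\times 2$ matrix $(2\ \ 3)$ neither column is a natural-number multiple of the other, so taking either as the ``basis column'' forces a non-integer coefficient. More fundamentally, the minimum $k$ admitting $A=B\dcomp_{k}C$ inside $\Mat_{\naturals}$ is the nonnegative (integer) factorisation rank of $A$, and it is well known that nonnegative rank can strictly exceed the ordinary rank. Thus the lemma, read literally in $\Mat_{\naturals}$ with the usual rank, is not true in general, and no argument confined to $\naturals$ will establish the $\leq$ direction. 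Your proof is correct for what the cited reference actually proves---the statement over a field---and the paper's later manipulations (e.g.\ inverting a matrix in the proof of Lemma~\ref{lemma:cut-along-ranks}) confirm that a field is tacitly intended.
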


\medskip
We first introduce a convenient syntax for matrices.

\begin{proposition}[\cite{zanasi2015thesis}]
  The category \(\Mat_{\naturals}\) is isomorphic to the prop \(\Bialg\), generated by \(\cp \colon 1 \to 2\), \(\delete \colon 1 \to 0\), \(\add \colon 2 \to 1\) and \(\zero \colon 0 \to 1\) and quotiented by bialgebra axioms (\Cref{fig:bialgebra}).
\end{proposition}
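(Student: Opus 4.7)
The plan is to exhibit a prop morphism $F \colon \Bialg \to \Mat_{\naturals}$, prove it is well-defined (i.e.\ sends the bialgebra axioms to matrix identities), and then argue that it is both full and faithful. Since both props have $\naturals$ as objects with addition as monoidal product, essential surjectivity is automatic, so isomorphism reduces to bijectivity on each homset.

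On generators, $F$ is the obvious assignment: $\cp \mapsto \begin{psmallmatrix}1\\1\end{psmallmatrix}$, $\delete \mapsto ()$ (the unique $0 \times 1$ matrix), $\add \mapsto \begin{psmallmatrix}1 & 1\end{psmallmatrix}$, $\zero \mapsto ()$ (the unique $1 \times 0$ matrix). Well-definedness is a routine check: each of the eight bialgebra equations in \Cref{fig:bialgebra} becomes a small matrix identity, most of them sized $2 \times 2$ or degenerate ($0 \times 0$, $0 \times 1$, etc.), and all can be verified by direct multiplication. At this point $F$ is a bona fide symmetric monoidal functor preserving the prop structure strictly.

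For fullness, I want to show every matrix $A \in \Mat_\naturals(n,m)$ lies in the image of $F$. The idea is to realise $A$ entry by entry: copy each input wire $j$ into $m$ bundles via iterated $\cp$ (using the comonoid coassociativity to make sense of ``$m$-fold copy''), in the $i$-th bundle further copy into $A_{ij}$ parallel wires (with $A_{ij} = 0$ realised by $\delete$), then use permutations to group together all wires destined for output $i$, and finally collapse each group with iterated $\add$ (with the empty group filled by $\zero$). This decomposition is a concrete term in $\Bialg$ whose image under $F$ is exactly $A$.

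The main obstacle is faithfulness, and here the proof hinges on a normal-form theorem. The strategy is to use the bialgebra equations as a rewriting system to push all copies/units to the left of the string diagram and all adds/counits to the right, producing a canonical form of the shape $(\text{tree of } \cp\text{'s and }\delete\text{'s}) \dcomp \pi \dcomp (\text{tree of }\add\text{'s and }\zero\text{'s})$, where $\pi$ is a parallel product of wires, zero maps, and symmetries recording multiplicities. The bialgebra axiom $\add \dcomp \cp = (\cp \tensor \cp) \dcomp (\id \tensor \swap \tensor \id) \dcomp (\add \tensor \add)$ is precisely what allows the rearrangement, and the (co)commutativity/(co)associativity axioms let one normalise the trees to canonical balanced forms. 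One then shows that two such normal forms which map to the same matrix must coincide syntactically: the matrix entry $A_{ij}$ equals the number of wires routed from input $j$ to output $i$ in the central $\pi$, so the matrix uniquely determines $\pi$ (up to the already-quotiented symmetry axioms), hence the entire normal form. Combining this with the Church--Rosser-style confluence of the rewriting completes faithfulness, and hence $F$ is an isomorphism of props.
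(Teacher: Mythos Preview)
The paper does not prove this proposition: it is quoted from~\cite{zanasi2015thesis} and followed only by an informal one-sentence explanation (the $(i,j)$-entry of the matrix is the number of paths from input $j$ to output $i$ in the string diagram). There is therefore no proof in the paper to compare against.

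Your sketch is the standard route and is broadly correct. The functor definition and the verification of the bialgebra axioms in $\Mat_{\naturals}$ are routine, and your fullness construction is exactly the ``copy, route, add'' factorisation that realises an arbitrary matrix. For faithfulness your normal-form idea is right in spirit---this is essentially Lack's distributive-law presentation of $\Bialg$ as the composite of the commutative-monoid prop and the cocommutative-comonoid prop, which is how~\cite{zanasi2015thesis} organises the argument---but be aware that the phrase ``Church--Rosser-style confluence of the rewriting'' hides the only genuinely nontrivial step. Orienting the bialgebra law as a rewrite and checking all critical pairs (especially those involving the (co)unit equations) is where the work lies; invoking confluence without doing that check is the one gap in your write-up. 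If you want to avoid the rewriting analysis, an alternative is to appeal directly to the universal property: $\Mat_{\naturals}$ is the free prop on a bicommutative bialgebra (equivalently, the biproduct completion of $\naturals$ as a one-object category), which yields the isomorphism without an explicit normal-form argument.
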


For the uninitiated reader, let us briefly explain this correspondence.
Every morphism \(f \colon n \to m\) in \(\Bialg\) corresponds to a matrix \(A = \BialgToMat(f) \in \Mat_{\naturals}(m,n)\): we can read the \((i,j)\)-entry of \(A\) off the diagram of \(f\) by counting the number of  paths from the \(j\)th input to the \(i\)th output.

\begin{example}
  The matrix \(\begin{psmallmatrix} 1 & 0 \\ 1 & 2 \\ 0 & 0 \end{psmallmatrix} \in \Mat_{\naturals}(3,2)\) corresponds to
  \begin{center}
    \matrixExFig{}
  \end{center}
\end{example}


\begin{definition}
  The atomic morphisms $\decgenerators$ are the generators of \(\Bialg\), with the symmetry and identity on \(1\):  \(\decgenerators = \{\cp,\delete,\add,\zero,\swap{},\id{1}\}\).
  The weight  \(\nodeweight \colon \decgenerators \union \monoidaloperations{\Bialg} \to \naturals\) has \(\nodeweight(\dcompnode{n}) \defn n\), for any \(n \in \naturals\), and \(\nodeweight(g) \defn \max\{m,n\}\), for \(g \colon n \to m \in \decgenerators\).
\end{definition}

\subsection{Monoidal width in \(\Bialg\)}

The characterisation of the rank of a matrix in \Cref{lemma:min-cut-is-rank} hints at some relationship between the monoidal width of a matrix and its rank.
In fact, we have \Cref{prop:matrix-mwd-non-tensor-decomposables}, which bounds the monoidal width of a matrix with its rank.
In order to prove this result, we first need to bound the monoidal width of a matrix with its domain and codomain, which is done in \Cref{prop:mwd-lessthan-domain-codomain}.

\begin{proposition}\label{prop:mwd-lessthan-domain-codomain}
  Let \(\cat{P}\) be a cartesian and cocartesian prop.
  Suppose that \(\id{1}, \cp_1, \add_1, \delete_1, \zero_1 \in \decgenerators\)
  and \(\nodeweight(\id{1}) \leq 1\), \(\nodeweight(\cp_1) \leq 2\), \(\nodeweight(\add_1) \leq 2\), \(\nodeweight(\delete_1) \leq 1\) and \(\nodeweight(\zero_1) \leq 1\).
  Suppose that, for every \(g \colon 1 \to 1\), \(\mwd(g) \leq 2\).
  Let \(f \colon n \to m\) be a morphism in \(\cat{P}\).
  Then \(\mwd(f) \leq \min\{m,n\}+1\).
\end{proposition}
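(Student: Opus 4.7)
The plan is strong induction on $n + m$: in the inductive step, split off a single output wire using the cartesian structure when $n \leq m$, and dually a single input wire using the cocartesian structure when $n > m$.

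\emph{Base cases.} If $n = 0$, then $0$ is initial in the cocartesian structure of $\cat{P}$, so $f = \zero_1^{\tensor m}$; its tensor-only decomposition has width at most $\nodeweight(\zero_1) \leq 1 = n+1$. The case $m = 0$ is symmetric via $\delete_1$. If $n = m = 1$, the hypothesis $\mwd(g) \leq 2$ for every $g \colon 1 \to 1$ gives $\mwd(f) \leq 2 = n+1$ directly.

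\emph{Inductive step.} Let $n, m \geq 1$ with $n + m \geq 3$. The weights of $\cp_1,\add_1$ agree, as do those of $\delete_1,\zero_1$, so the hypotheses are self-dual under interchanging the cartesian and cocartesian generators; it therefore suffices to treat $n \leq m$, the case $n > m$ being the mirror argument with $\add$ and $\zero$ in place of $\cp$ and $\delete$. Assuming $n \leq m$ (so $m \geq 2$), the universal property of products gives
\[
  f \;=\; \cp_n \dcomp (f_1 \tensor f_2) \;=\; \bigl(\cp_n \dcomp (f_1 \tensor \id_n)\bigr) \dcomp (\id_1 \tensor f_2),
\]
where $f_1 \colon n \to 1$ is the first output of $f$ and $f_2 \colon n \to m-1$ the tuple of the remaining outputs. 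The reassociation is the key trick: the outer composition cut is now along $1 + n$ wires, width $n+1$, rather than along the $2n$ wires one would pay for a cut immediately after $\cp_n$.

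The left factor $h := \cp_n \dcomp (f_1 \tensor \id_n)$ is exactly $\gamma(f_1)$ in \Cref{lemma:mwd-copy} with $Y = Z = 0$, $\overline{X} = 1^{\tensor n}$, and $W = 1$, making the lemma's bound $\max\{\decwidth(d), n+1\}$. The inductive hypothesis gives $\mwd(f_1) \leq 2$ and $\mwd(f_2) \leq \min\{n, m-1\} + 1 \leq n+1$, so feeding an optimal $d$ for $f_1$ into \Cref{lemma:mwd-copy} decomposes $h$ with width $\leq n+1$, while tensoring an optimal decomposition of $f_2$ with $\id_1$ (weight $\leq 1$) gives a decomposition of $\id_1 \tensor f_2$ of width $\leq n+1$. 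Together with the outer cut of weight $n+1$, this yields $\mwd(f) \leq n+1 = \min\{m, n\} + 1$. The only non-routine point is the dual of \Cref{lemma:mwd-copy} needed for $n > m$: the excerpt states the copying lemma only for $\cp$, but it has a verbatim $\add$-analogue by the same recursive argument as \Cref{ex:mwd-copy}, and the main obstacle is simply to state and cite this companion lemma before starting the induction.
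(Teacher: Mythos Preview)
Your proof is correct and follows essentially the same approach as the paper: split off one output (or input) wire via the (co)cartesian structure, reassociate to get a cut along $n+1$ (resp.\ $m+1$) wires, and apply \Cref{lemma:mwd-copy} together with induction. Your induction on $n+m$ is marginally cleaner than the paper's induction on $\max\{m,n\}$ (which must cross-reference from its second case back to its first when $m=n$), and your observation that a dual of \Cref{lemma:mwd-copy} for $\add$ is needed applies equally to the paper's own proof, which invokes the lemma in the cocartesian case without stating the dual explicitly.
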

\begin{proof}[Proof sketch]
  The proof proceeds by induction on \(\max\{m,n\}\).
  The base cases are easily checked.
  The inductive step relies on the fact that, applying~\Cref{lemma:mwd-copy}, if \(n < m\), we can decompose \(f\) as shown below by cutting at most \(n+1\) wires or, if \(m <n\), in the symmetric way by cutting at most \(m+1\) wires.
  \[\mwdMatlessthanBoundariesOneShort{}\]
\end{proof}

We can apply the former result to \(\Bialg\) and obtain \Cref{prop:matrix-mwd-non-tensor-decomposables} because the width of \(1 \times 1\) matrices, which are numbers, is at most \(2\).
This follows from the reasoning in~\Cref{ex:mwd-number-like-morphisms} as we can write every natural number \(k \colon 1 \to 1\) as the following composition:
\begin{center}
  \mondecnumbersFig{}
\end{center}
\begin{lemma}\label{lemma:mwd-numbers}
  Let \(k \colon 1 \to 1\) in \(\Bialg\).
  Then, \(\mwd(k) \leq 2\).
\end{lemma}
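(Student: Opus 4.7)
The plan is to induct on $k$, exhibiting for each natural number an explicit decomposition of width at most $2$, in the spirit of Example~\ref{ex:mwd-number-like-morphisms}.

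For the base cases I would handle $k \in \{0,1,2\}$ directly. Via the isomorphism $\Bialg \cong \Mat_{\naturals}$, the morphism $0 \colon 1 \to 1$ factors as $\delete \dcomp_{0} \zero$, giving a decomposition of width $\max\{\nodeweight(\delete), 0, \nodeweight(\zero)\} = 1$; the morphism $1$ is $\id{1}$, atomic of weight $\nodeweight(\id{1}) = 1$; and $2$ factors as $\cp \dcomp_{2} \add$, yielding a decomposition of width $\max\{\nodeweight(\cp), \nodeweight(2), \nodeweight(\add)\} = 2$.

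For the inductive step, suppose $\mwd(k) \leq 2$ for some $k \geq 2$ and fix a decomposition $d_{k} \in \decset{k}$ witnessing this. The key identity I would use is $k+1 = \cp \dcomp_{2} (k \tensor \id{1}) \dcomp_{2} \add$, which holds in $\Bialg$ because applying $\BialgToMat$ to the right-hand side yields the matrix product $\begin{psmallmatrix} 1 & 1 \end{psmallmatrix} \begin{psmallmatrix} k & 0 \\ 0 & 1 \end{psmallmatrix} \begin{psmallmatrix} 1 \\ 1 \end{psmallmatrix} = k+1$. From this I would build the decomposition $\nodegenerator{\leafgenerator{\cp}}{\dcomp_{2}}{\nodegenerator{\nodegenerator{d_{k}}{\tensor}{\leafgenerator{\id{1}}}}{\dcomp_{2}}{\leafgenerator{\add}}}$, whose width equals $\max\{\decwidth(d_{k}), \nodeweight(\cp), \nodeweight(\add), \nodeweight(\id{1}), \nodeweight(2)\} = \max\{2,2,2,1,2\} = 2$ by the inductive hypothesis.

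Nothing here is especially delicate: the argument is essentially bookkeeping, verifying that every node of the recursively constructed decomposition has weight at most $2$. The only step that reaches outside the inductive scheme is the $\Bialg$-equality in the inductive identity, but this reduces immediately to the one-line matrix calculation exhibited above.
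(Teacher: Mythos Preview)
Your proof is correct and takes essentially the same approach as the paper: the paper's argument for this lemma is simply a pointer to the inductive decomposition of Example~\ref{ex:mwd-number-like-morphisms} (with $\add$ playing the role of $\cocp$ in $\Bialg$), and you have spelled out precisely that induction. The only addition is your explicit treatment of the case $k=0$, which the example omits but which causes no difficulty.
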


\begin{proposition}\label{prop:matrix-mwd-non-tensor-decomposables}
  Let \(f \colon n \to m\) in \(\Bialg\).
  Then, \(\mwd f \leq \rank(\BialgToMat f) +1\).
  Moreover, if \(f\) is not \(\tensor\)-decomposable, i.e. there are no \(f_1,f_2\) both distinct from \(f\) s.t. \(f = f_1 \tensor f_2\), then \(\rank(\BialgToMat f) \leq \mwd f\).
\end{proposition}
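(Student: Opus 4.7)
The plan is to prove the two inequalities separately. For the upper bound \(\mwd(f) \leq \rank(\BialgToMat f) + 1\), I would invoke \Cref{lemma:min-cut-is-rank} (transported from \(\Mat_{\naturals}\) to \(\Bialg\) via the isomorphism \(\BialgToMat\)) to factor \(f = f_1 \dcomp_r f_2\), where \(r \defn \rank(\BialgToMat f)\), \(f_1 \colon n \to r\) and \(f_2 \colon r \to m\). Then I would apply \Cref{prop:mwd-lessthan-domain-codomain} to each \(f_i\), obtaining \(\mwd(f_i) \leq \min\{n,r\} + 1 \leq r + 1\) and \(\mwd(f_i) \leq \min\{r,m\} + 1 \leq r + 1\) respectively. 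To verify its hypotheses in \(\Bialg\), I would observe that \(\Bialg \cong \Mat_{\naturals}\) is both cartesian and cocartesian because its monoidal product is the biproduct of matrices, that the required atoms lie in \(\decgenerators\) with the stipulated weight bounds, and that \(\mwd(g) \leq 2\) for every \(g \colon 1 \to 1\) is exactly \Cref{lemma:mwd-numbers}. Assembling the two witness decompositions across the cut of width \(r\) then yields a decomposition of \(f\) of width \(\max\{r+1, r, r+1\} = r+1\).

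For the lower bound, assuming \(f\) is not \(\tensor\)-decomposable, I would show by induction on the size of a decomposition tree \(d \in \decset{f}\) that \(\rank(\BialgToMat f) \leq \decwidth(d)\). If \(d\) is a leaf, then \(f \in \decgenerators\), and a finite case analysis confirms the bound: the matrices of \(\cp\), \(\add\), and \(\id{1}\) have rank \(1\); those of \(\zero\) and \(\delete\) have rank \(0\); and that of \(\swap{}\) has rank \(2\), each at most the corresponding weight. If \(d = d_1 \dcomp_k d_2\) with \(d_i \in \decset{f_i}\), then \(\BialgToMat f\) factors as a product of matrices through a space of dimension \(k\), so \(\rank(\BialgToMat f) \leq k \leq \decwidth(d)\). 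If \(d = d_1 \tensor d_2\) with \(d_i \in \decset{f_i}\) and \(f = f_1 \tensor f_2\), the non-\(\tensor\)-decomposability of \(f\) forces, up to symmetry, \(f_1 = f\) and \(f_2 = \id{0}\); the sub-decomposition \(d_1\) is then strictly smaller, and the inductive hypothesis applies to give \(\rank(\BialgToMat f) \leq \decwidth(d_1) \leq \decwidth(d)\).

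The main obstacle is the \(\tensor\) case of the lower bound: the hypothesis rules out only \emph{proper} \(\tensor\)-splittings, so trivial ones of the form \(f = f \tensor \id{0}\) may still occur at the root of a decomposition and must be peeled off. Induction on the size of the decomposition tree, rather than on any structural feature of \(f\) itself, is what dispatches them cleanly. A secondary but essential technical point is verifying that \(\Bialg\) meets the preconditions of \Cref{prop:mwd-lessthan-domain-codomain} used in the upper bound; this reduces to exhibiting its cartesian and cocartesian structure, which is immediate from the biproduct structure of \(\Mat_{\naturals}\).
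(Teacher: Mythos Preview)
Your proof is correct and follows essentially the same line as the paper's: both invoke \Cref{lemma:min-cut-is-rank} and \Cref{prop:mwd-lessthan-domain-codomain} (via \Cref{lemma:mwd-numbers}) for the upper bound, and both handle the lower bound by case analysis on the root node of a decomposition. Your treatment is in fact slightly more careful than the paper's: the paper simply asserts that for a non-\(\tensor\)-decomposable \(f\) ``there are two options'' (leaf or composition root), silently ignoring the trivial splittings \(f = f \tensor \id{0}\) that you correctly dispatch by inducting on the size of the decomposition tree, and you also include the \(\id{1}\) leaf case, which the paper omits.
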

\begin{proof}[Proof sketch]
  This result follows from~\Cref{lemma:min-cut-is-rank} and~\Cref{prop:mwd-lessthan-domain-codomain}, which we can apply thanks to \Cref{lemma:mwd-numbers}.
\end{proof}

The bounds given by \Cref{prop:matrix-mwd-non-tensor-decomposables} can be improven when we have a \(\tensor\)-decomposition of a matrix, i.e. we can write \(f = f_{1} \tensor \dots \tensor f_{k}\), to obtain \Cref{prop:matrices-better-tensors-first}.
The latter relies on \Cref{lemma:mwd-after-discard}, which shows that discarding inputs or outputs cannot increase the monoidal width of a morphism in \(\Bialg\).

\begin{lemma}\label{lemma:mwd-after-discard}
  Let \(f \colon n \to m\) in \(\Bialg\) and \(d \in \decset{f}\).
  Let \(f_{D} \defn f \dcomp (\id{m-k} \tensor \delete_k)\) and \(f_{Z} \defn (\id{n-k'} \tensor \zero_{k'}) \dcomp f\), where \(\delete_{k} \colon k \to 0\) is the discard morphism with \(k \leq m\) and \(\zero_{k'} \colon 0 \to k\) is the zero morphism with \(k' \leq n\).
  \[\mwdafterdiscardStateFig{}\]
  Then there are \(\deleteMdec(d) \in \decset{f_{D}}\) and \(\codeleteMdec(d) \in \decset{f_{Z}}\) such that \(\decwidth(\deleteMdec(d)) \leq \decwidth(d)\) and \(\decwidth(\codeleteMdec(d)) \leq \decwidth(d)\).
\end{lemma}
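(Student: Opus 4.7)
The plan is to define $\deleteMdec$ and $\codeleteMdec$ by simultaneous structural induction on $d$, pushing the discard $\delete_{k}$ (resp.\ the zero $\zero_{k'}$) into the decomposition and arguing that this never inflates the weight of any node.

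For the leaf case $d = \leafgenerator{f}$ with $f \in \decgenerators$, I enumerate $f$ and $k$; every atom has codomain of size at most $2$, so only finitely many subcases arise. Using the bialgebra equations, each composite $f \dcomp (\id{m-k} \tensor \delete_{k})$ simplifies to an atom or a tensor of atoms of weight at most $\nodeweight(f)$. For example, $\cp \dcomp (\id{1} \tensor \delete) = \id{1}$ by the counit axiom, $\add \dcomp \delete = \delete \tensor \delete$ by naturality of the counit with respect to $\add$, and $\swap{} \dcomp (\id{1} \tensor \delete) = \delete \tensor \id{1}$; in each case the resulting decomposition has width at most $\nodeweight(f) = \decwidth(d)$.

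For the tensor case $d = \nodegenerator{d_{1}}{\tensor}{d_{2}}$ with $f_i \colon n_i \to m_i$, I split the $k$ discarded outputs between $f_1$ and $f_2$ according to output allocation: if $k \leq m_2$ only $d_2$ is modified and I set $\deleteMdec(d) \defn \nodegenerator{d_{1}}{\tensor}{\deleteMdec(d_{2})}$; if $k > m_2$ both $d_1$ (losing $k - m_2$ outputs) and $d_2$ (losing all $m_2$) are recursively adjusted. Since the width of a tensor node is the maximum of the children's widths, this cannot exceed $\decwidth(d)$. For the composition case $d = \nodegenerator{d_{1}}{\dcomp_{X}}{d_{2}}$, the outputs of $f$ coincide with those of $f_2$, so $f_{D} = f_1 \dcomp_{X} (f_2 \dcomp (\id{m-k} \tensor \delete_{k}))$; the induction hypothesis applied to $d_2$ yields a decomposition of width at most $\decwidth(d_{2})$, and packaging with $d_1$ at the same cut object $X$ gives $\decwidth(\deleteMdec(d)) \leq \max\{\decwidth(d_{1}), \nodeweight(X), \decwidth(\deleteMdec(d_{2}))\} \leq \decwidth(d)$.

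The construction of $\codeleteMdec$ is completely dual: the zero pre-composes, so in the tensor case the zeros are split according to input allocation, and in the composition case the inductive step recurses into $d_1$ rather than $d_2$; the base cases use the unit equations for $\zero$ in place of the counit equations for $\delete$. The only genuine obstacle is the leaf case, where one must verify the few bialgebra rewrites case by case; the inductive steps are essentially bookkeeping, since at each internal node we either preserve its form and recurse into the relevant child or substitute a child with a decomposition of no greater width.
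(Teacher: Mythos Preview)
Your proof is correct and follows essentially the same approach as the paper's own proof: structural induction on $d$, a finite case analysis over the six atoms for the leaves, recursing into $d_{2}$ at composition nodes, and splitting the tensor case according to whether $k \leq m_{2}$, with the dual argument for $\codeleteMdec$.
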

\begin{proof}[Proof sketch]
  By induction.
  The base cases are easy.
  If \(f = f_1 \dcomp f_2\), use the inductive hypothesis on \(f_{2}\).
  \[\mwdafterdiscardProofFigTwo{}\]
  The \(f = f_{1} \tensor f_{2}\) case is similar.
\end{proof}

\begin{proposition}\label{prop:matrices-better-tensors-first}
  Let \(f \colon n \to m\) in \(\Bialg\) and \(d' = \nodegenerator{d'_1}{\dcomp_k}{d'_2} \in \decset{f}\).
  Suppose there are \(f_1\) and \(f_2\) such that \(f = f_1 \tensor f_2\).
  Then, there is \(d = \nodegenerator{d_1}{\tensor}{d_2} \in \decset{f}\) such that \(\decwidth(d) \leq \decwidth(d')\).
\end{proposition}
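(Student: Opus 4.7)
The plan is to convert the given sequential decomposition of $f$ into a pair of sequential decompositions, one of $f_1$ and one of $f_2$, whose widths are each bounded by $\decwidth(d')$. Combining these with a $\tensor$-node at the root then gives the desired $d$.

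Write $f_1 \colon n_1 \to m_1$ and $f_2 \colon n_2 \to m_2$, so $n = n_1 + n_2$ and $m = m_1 + m_2$. Let $g_1 \colon n \to k$ and $g_2 \colon k \to m$ be the morphisms decomposed by $d'_1$ and $d'_2$, so that $f = g_1 \dcomp g_2$. First I would use the fact that, in $\Mat_{\naturals}$, $\BialgToMat(f) = \begin{psmallmatrix} A_1 & 0 \\ 0 & A_2 \end{psmallmatrix}$ with $A_i = \BialgToMat(f_i)$. Defining, for each $i \in \{1,2\}$, the projections
\[
  g_1^{(i)} \defn (\zero_{n_{\bar i}} \tensor \id{n_i} \text{ suitably placed}) \dcomp g_1, \qquad g_2^{(i)} \defn g_2 \dcomp (\delete_{m_{\bar i}} \tensor \id{m_i} \text{ suitably placed}),
\]
a direct block-matrix calculation shows that $g_1^{(i)} \dcomp g_2^{(i)} = f_i$ (the off-diagonal blocks vanish precisely because $f$ is $\tensor$-decomposable), yielding $f_i$ as a sequential composition that still factors through the object $k$.

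Next I would apply Lemma~\ref{lemma:mwd-after-discard} to $d'_1$ and $d'_2$ to produce decompositions $e_1^{(i)} \in \decset{g_1^{(i)}}$ and $e_2^{(i)} \in \decset{g_2^{(i)}}$ satisfying $\decwidth(e_1^{(i)}) \leq \decwidth(d'_1)$ and $\decwidth(e_2^{(i)}) \leq \decwidth(d'_2)$. The lemma is stated for discarding/zeroing the rightmost wires; for $i = 2$ (where we need to discard the first wires) I would precompose/postcompose with a symmetry $\swap{}$ to reduce to the stated form. Since swaps are atomic of weight bounded by the wires they permute, this costs no more than the current width of the sub-decompositions.

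Finally, setting $d_i \defn \nodegenerator{e_1^{(i)}}{\dcomp_k}{e_2^{(i)}} \in \decset{f_i}$, we get
\[
  \decwidth(d_i) = \max\{\decwidth(e_1^{(i)}),\, k,\, \decwidth(e_2^{(i)})\} \leq \max\{\decwidth(d'_1),\, k,\, \decwidth(d'_2)\} = \decwidth(d'),
\]
and $d \defn \nodegenerator{d_1}{\tensor}{d_2}$ is a decomposition of $f$ with $\decwidth(d) = \max\{\decwidth(d_1),\decwidth(d_2)\} \leq \decwidth(d')$. The main obstacle is the book-keeping with symmetries in order to apply Lemma~\ref{lemma:mwd-after-discard} for the second summand without inflating the width; everything else is a routine block-matrix verification in $\Bialg$.
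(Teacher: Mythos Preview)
Your approach is genuinely different from the paper's, and the obstacle you flag at the end is real rather than mere book-keeping.

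The paper never looks inside $d'_1$ or $d'_2$. It argues globally via rank: since $d'$ has a $\dcomp_k$ at the root, \Cref{lemma:min-cut-is-rank} gives $k \geq \rank(\BialgToMat f) = r_1 + r_2$, and \Cref{prop:matrix-mwd-non-tensor-decomposables} supplies \emph{fresh} decompositions $d_i$ of $f_i$ with $\decwidth(d_i)\leq r_i+1$. When both $r_i>0$ one gets $\decwidth(d)\leq\max\{r_1,r_2\}+1\leq r_1+r_2\leq k\leq\decwidth(d')$; the degenerate case $r_i=0$ is dispatched separately with \Cref{lemma:mwd-after-discard}. Your plan, by contrast, extracts $d_1,d_2$ directly from $d'$, which is more constructive.

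For $i=1$ your extraction works as stated. For $i=2$, however, the reduction via symmetries does not. To apply \Cref{lemma:mwd-after-discard} to $(\zero_{n_1}\tensor\id{n_2})\dcomp g_1$ you rewrite it as $(\id{n_2}\tensor\zero_{n_1})\dcomp(\swap{n_2,n_1}\dcomp g_1)$ and then need a decomposition of $\swap{n_2,n_1}\dcomp g_1$; attaching the symmetry to $d'_1$ forces a $\dcomp_n$ node of cost $n=n_1+n_2$, which can be arbitrarily larger than $\decwidth(d')$. Concretely, take $f_1\colon 50\to 50$ the zero matrix and $f_2=\id{1}$: then $f=(\delete_{50}\tensor\id{1})\dcomp_1(\zero_{50}\tensor\id{1})$ admits a $d'$ of width $1$, yet $n=51$. (Note also that in this paper only $\swap{1,1}$ is atomic, not general symmetries, so ``swaps are atomic'' is already inaccurate.)

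The right fix is not symmetries but a mild strengthening of \Cref{lemma:mwd-after-discard}: its inductive proof goes through unchanged if one zeroes an \emph{arbitrary} subset of inputs (resp.\ discards an arbitrary subset of outputs) rather than a final segment, since the base-case table for the six atoms is symmetric under swapping which wire is removed. With that version of the lemma your argument is complete and yields a decomposition $d$ built explicitly out of $d'$, which the paper's rank-based argument does not.
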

\begin{proof}[Proof sketch]
  By~\Cref{lemma:min-cut-is-rank}, \(\rank(\BialgToMat f_{1}) + \rank(\BialgToMat f_{2}) = \rank(\BialgToMat(f_{1} \tensor f_{2})) \leq k\) and, by~\Cref{prop:matrix-mwd-non-tensor-decomposables}, there is a monoidal decomposition \(d_{i}\) of \(f_{i}\) such that \(\decwidth(d_{i}) \leq \rank(\BialgToMat f_{i}) + 1\).
  Then, \(\decwidth(d) \defn \decwidth(\nodegenerator{d_{1}}{\tensor}{d_{2}}) \leq \max \{\rank(\BialgToMat f_{1}), \rank(\BialgToMat f_{2})\} + 1 \leq \rank(\BialgToMat f_{1}) + \rank(\BialgToMat f_{2})\) whenever \(\rank(\BialgToMat f_{1}), \rank(\BialgToMat f_{2}) > 0\).
  We apply \Cref{lemma:mwd-after-discard} to obtain the same result if \(\rank(\BialgToMat f_1) = 0\) or \(\rank(\BialgToMat f_2) = 0\).
\end{proof}

We summarise \Cref{prop:matrices-better-tensors-first} and \Cref{prop:matrix-mwd-non-tensor-decomposables} in \Cref{cor:mwd-matrices-blocks}.

\begin{corollary}\label{cor:mwd-matrices-blocks}
  Let \(f = f_{1} \tensor \dots \tensor f_{k}\) in \(\Bialg\).
  Then, \(\mwd(f) \leq \max_{i = 1,\dots,k} \rank(\BialgToMat(f_{i})) + 1\).
  Moreover, if \(f_i\) are not \(\tensor\)-decomposable, then \(\max_{i = 1,\dots,k} \rank(\BialgToMat(f_{i})) \leq \mwd f\).
\end{corollary}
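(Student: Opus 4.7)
The plan is to derive the corollary by combining \Cref{prop:matrix-mwd-non-tensor-decomposables} and \Cref{prop:matrices-better-tensors-first} in two independent directions.

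For the upper bound $\mwd(f) \leq \max_i \rank(\BialgToMat(f_i)) + 1$, I would proceed constructively: apply \Cref{prop:matrix-mwd-non-tensor-decomposables} to each $f_i$ to obtain $d_i \in \decset{f_i}$ with $\decwidth(d_i) \leq \rank(\BialgToMat(f_i)) + 1$, and assemble these via a right-leaning $\tensor$-tree into $d \in \decset{f}$. Since $\nodeweight(\tensor) = 0$, the width of $d$ equals $\max_i \decwidth(d_i)$, which yields the bound on $\mwd(f)$.

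For the lower bound, assume each $f_i$ is $\tensor$-indecomposable and fix an optimal $d \in \decset{f}$ realising $\mwd(f)$. I would argue by induction on $k$ that, without increasing its width, $d$ can be rewritten so that its top layer is a binary $\tensor$-tree whose $k$ leaves are sub-decompositions $d_i \in \decset{f_i}$. If the root of $d$ is a $\dcomp$-node, \Cref{prop:matrices-better-tensors-first} applied to the factorisation $f = f_1 \tensor (f_2 \tensor \cdots \tensor f_k)$ replaces it by a $\tensor$-rooted node of no greater width. If the root of $d$ is already a $\tensor$-node splitting $f = g_1 \tensor g_2$, then by uniqueness of the block-diagonal decomposition of a matrix in $\Bialg \cong \Mat_{\naturals}$, each $g_j$ is itself a $\tensor$-product of some subfamily of the $\tensor$-indecomposable $f_i$'s, and one recurses on each side. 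Once a leaf $d_i \in \decset{f_i}$ of the reshuffled tree is reached, \Cref{prop:matrix-mwd-non-tensor-decomposables} gives $\rank(\BialgToMat(f_i)) \leq \decwidth(d_i) \leq \decwidth(d) = \mwd(f)$; taking the maximum over $i$ concludes.

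The main obstacle is justifying the recursive step when the root of $d$ is already a $\tensor$-node: one needs the fact that a $\tensor$-indecomposable morphism of $\Bialg$ cannot straddle two factors of any $\tensor$-decomposition of $f$, i.e.\ uniqueness up to permutation of the factorisation into $\tensor$-indecomposables. Everything else is routine bookkeeping to make the induction terminate at the $f_i$'s.
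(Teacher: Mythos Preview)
Your upper-bound argument is exactly the paper's: build a $\tensor$-tree from per-factor decompositions supplied by \Cref{prop:matrix-mwd-non-tensor-decomposables} and use that $\tensor$-nodes cost nothing.

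For the lower bound you work harder than the paper. The paper just applies \Cref{prop:matrix-mwd-non-tensor-decomposables} to each indecomposable factor to get $\rank(\BialgToMat f_i)\le\mwd(f_i)$ and then passes to $\max_i\mwd(f_i)\le\mwd(f)$ without further comment. Your inductive reshaping of an optimal decomposition --- \Cref{prop:matrices-better-tensors-first} at $\dcomp$-roots, uniqueness of the block-diagonal factorisation at $\tensor$-roots --- is a correct way to substantiate that step, and the ``obstacle'' you isolate is precisely \Cref{prop:mon-cat-unique-tensor-decomposition}, which the paper proves immediately after the corollary (and independently of it). So your argument is sound; it just front-loads a result the paper defers. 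A shorter alternative, closer to the paper's spirit, is to get $\mwd(f_i)\le\mwd(f)$ directly from a mild extension of \Cref{lemma:mwd-after-discard}: each $f_i$ is obtained from $f$ by zeroing the remaining inputs and discarding the remaining outputs, and the induction in that lemma goes through unchanged for non-trailing positions.
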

\begin{proof}
  By \Cref{prop:matrices-better-tensors-first} there is a decomposition of \(f\) of the form \(d = \nodegenerator{d_1}{\tensor}{\cdots \nodegenerator{d_{k-1}}{\tensor}{d_k}}\), where we can choose \(d_i\) to be a minimal decomposition of \(f_i\).
  Then, \(\mwd(f) \leq \decwidth(d) = \max_{i = 1,\ldots,k} \decwidth(d_i)\).
  By \Cref{prop:matrix-mwd-non-tensor-decomposables}, \(\decwidth(d_i) \leq r_i +1\).
  Then, \(\mwd(f) \leq \max\{r_1,\ldots,r_k\}+1\).
  Moreover, if \(f_i\) are not \(\tensor\)-decomposable, \Cref{prop:matrix-mwd-non-tensor-decomposables} gives also a lower bound on their monoidal width: \(\rank(\BialgToMat(f_{i})) \leq \mwd f_i\); and we obtain that \(\max_{i = 1,\dots,k} \rank(\BialgToMat(f_{i})) \leq \mwd f\).
\end{proof}

The results so far show a way to construct efficient decompositions given a \(\tensor\)-decomposition of the matrix.
However, we do not know whether \(\tensor\)-decompositions are unique.
\Cref{prop:mon-cat-unique-tensor-decomposition} shows that every morphism in \(\Bialg\) has a unique \(\tensor\)-decomposition.

\begin{proposition}\label{prop:mon-cat-unique-tensor-decomposition}
  Let \(\cat{C}\) be a monoidal category whose monoidal unit \(0\) is both initial and terminal, and whose objects are a unique factorization monoid.
  Let \(f\) be a morphism in \(\cat{C}\).
  Then \(f\) has a unique \(\tensor\)-decomposition.
\end{proposition}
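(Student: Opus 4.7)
The strategy is to reduce the uniqueness of $\tensor$-decompositions of $f$ to the uniqueness of the induced factorizations of its domain $A$ and codomain $B$, via a recovery formula enabled by the initiality and terminality of $0$. The first step is a \emph{recovery lemma}: for any fixed splittings $A = A_1 \tensor A_2$ and $B = B_1 \tensor B_2$, there is at most one pair $(f_1,f_2)$ with $f = f_1 \tensor f_2$ and $f_i : A_i \to B_i$. I would use initiality/terminality of $0$ to produce, for each splitting $X = X_1 \tensor X_2$, an injection $\iota_i^X : X_i \cong X_i \tensor 0 \to X$ (using the initial map $0 \to X_j$) and a projection $\pi_i^X : X \to X_i \tensor 0 \cong X_i$ (using the terminal map $X_j \to 0$). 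A short calculation using the interchange law, together with the fact that $\mathrm{Hom}(0,0)$ is a singleton, yields $\iota_1^A \dcomp f \dcomp \pi_1^B = f_1$ up to the right unitor, and symmetrically for $f_2$.

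By the unique factorization hypothesis on the monoid of objects, $A$ factors canonically as $P_1 \tensor \cdots \tensor P_p$ into indecomposable objects, and likewise $B = Q_1 \tensor \cdots \tensor Q_q$. Any $\tensor$-factorization of $A$ corresponds to a partition of $\{P_1,\ldots,P_p\}$ into ordered blocks, and similarly for $B$. I would call a pair $(\sigma,\tau)$ of such partitions of equal length \emph{compatible} with $f$ when $f$ admits a $\tensor$-decomposition along the induced factorizations; by the recovery lemma, this decomposition is then uniquely determined.

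The main step is to show that compatible partitions are closed under common refinement: if $f$ is compatible with $(\sigma,\tau)$ and with $(\sigma',\tau')$, then it is compatible with $(\sigma \wedge \sigma', \tau \wedge \tau')$. I would define the candidate components of the refined decomposition by the injection-projection formula applied to the refined splitting, and verify that $f$ equals their $\tensor$-product by expanding first via the decomposition along $(\sigma,\tau)$ and then refining each resulting block via $(\sigma',\tau')$, appealing repeatedly to the interchange law and to the collapsing of composites through $0$. This diagrammatic bookkeeping is the main obstacle; it can be streamlined by induction on $p + q$, or by working with the idempotents $\pi_i \dcomp \iota_i$ attached to each splitting, which behave like honest projectors thanks to initiality/terminality of $0$.

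Once closure under refinement is established, the poset of compatible partitions has a unique finest element. The induced decomposition $f = f_1 \tensor \cdots \tensor f_k$ has each $f_i$ non-$\tensor$-decomposable, since any further splitting of some $f_i$ would produce a strictly finer compatible partition. Conversely, any decomposition of $f$ into non-$\tensor$-decomposable factors arises from a compatible partition, which must agree with the finest one; and the recovery lemma pins down each component uniquely from the splittings. Together this yields the desired uniqueness of the $\tensor$-decomposition.
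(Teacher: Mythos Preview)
Your recovery lemma is correct and is exactly the tool the paper uses: from $f=f_1\tensor f_2$ one recovers $f_1=(\id{}\tensor\zero)\dcomp f\dcomp(\id{}\tensor\delete)$ and symmetrically for $f_2$, using that $0$ is initial and terminal. The reduction to the unique factorisation of $\mathrm{dom}(f)$ and $\mathrm{cod}(f)$ is also the right move.

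The gap is in your main step, ``compatible pairs are closed under common refinement''. As you set it up, a $k$-fold $\tensor$-splitting of $f$ induces an ordered partition $\sigma$ of the indecomposable factors of the domain and an ordered partition $\tau$ of those of the codomain, both with $k$ blocks. But the meet $\sigma\wedge\sigma'$ and the meet $\tau\wedge\tau'$ need not have the same number of blocks, so $(\sigma\wedge\sigma',\tau\wedge\tau')$ is not even a candidate partition pair. Concretely, in $\Mat_{\naturals}$ take
\[
f=\begin{psmallmatrix}a&0\\0&0\\0&b\end{psmallmatrix}\colon 2\to 3,
\qquad
f=(a)\tensor\begin{psmallmatrix}0\\b\end{psmallmatrix}
 =\begin{psmallmatrix}a\\0\end{psmallmatrix}\tensor(b).
\]
Both splittings share the domain partition $(1)(1)$, while the codomain partitions are $(1)(2)$ and $(2)(1)$; their meet $(1)(1)(1)$ has three blocks. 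There \emph{is} a common refinement, namely $f=(a)\tensor\zero_{1}\tensor(b)$, but it requires inserting an empty domain block, which your framework of ``partitions into ordered blocks'' does not accommodate. Worse, if one tries to phrase everything as chains of cut points $(i,j)$ in $\{0,\dots,p\}\times\{0,\dots,q\}$, two compatible chains can contain incomparable points (take cuts $(1,3)$ and $(2,1)$ in two $2$-fold splittings of a map $3\to 3$), so their union is not a chain and no common refining chain exists in the naive sense. In these cases your ``refine each block of $(\sigma,\tau)$ via $(\sigma',\tau')$'' recipe does not type-check: $(\sigma',\tau')$ decomposes $f$, not the individual $f_i$.

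The paper sidesteps this by never forming a lattice of splittings. It fixes two decompositions into non-$\tensor$-decomposable factors, $f=f_1\tensor\cdots\tensor f_m=g_1\tensor\cdots\tensor g_n$ with $m\leq n$, and inducts on $m$. Applying your recovery formula to $\bar f\defn f_1\tensor\cdots\tensor f_{m-1}$ (precompose with a suitable $\id\tensor\zero$ and postcompose with $\id\tensor\delete$) expresses $\bar f$ as $g_1\tensor\cdots\tensor g_{k-2}\tensor((\id\tensor\zero)\dcomp g_{k-1}\dcomp(\id\tensor\delete))$; the inductive hypothesis forces $k=m-1$ and $f_i=g_i$ for $i<m-1$, and a symmetric calculation on $f_m$ finishes. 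This uses exactly your recovery lemma, but replaces the problematic meet with an induction that only ever compares two \emph{irreducible} decompositions.
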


Our main result in this section follows from~\Cref{cor:mwd-matrices-blocks} and~\Cref{prop:mon-cat-unique-tensor-decomposition}, which can be applied to \(\Bialg\) because \(0\) is both terminal and initial, and the objects, being a free monoid, are a unique factorization monoid.

\begin{theorem}\label{th:mwd-matrices}
  Let \(f = f_1 \tensor \ldots \tensor f_k\) be a morphism in \(\Bialg\) and its unique \(\tensor\)-decomposition given by \Cref{prop:mon-cat-unique-tensor-decomposition}, with \(r_i = \rank(\BialgToMat(f_i))\).
  Then \(\max\{r_1,\ldots,r_k\} \leq \mwd(f) \leq \max\{r_1,\ldots,r_k\}+1\).
\end{theorem}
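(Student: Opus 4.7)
The plan is to observe that the theorem is essentially a repackaging of \Cref{cor:mwd-matrices-blocks} together with \Cref{prop:mon-cat-unique-tensor-decomposition}, once we have verified that $\Bialg$ satisfies the hypotheses of the latter. So the proof is mostly a verification and an assembly, with no substantial new combinatorial content.

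First, I would check that $\Bialg$ fits the hypotheses of \Cref{prop:mon-cat-unique-tensor-decomposition}. The objects of $\Bialg$ are the natural numbers with addition as monoidal product, so they form a free commutative (in fact here free) monoid on one generator, which is a unique factorisation monoid in the trivial way: every $n$ factors uniquely as $1 + \cdots + 1$. Next, the monoidal unit $0$ is both initial and terminal in $\Bialg$, witnessed by the zero morphism $\zero_n \colon 0 \to n$ and the discard morphism $\delete_n \colon n \to 0$ (built from the generators $\zero$ and $\delete$); uniqueness follows from the bialgebra axioms (\Cref{fig:bialgebra}), which force any morphism $n \to 0$ to equal $\delete_n$ and any morphism $0 \to n$ to equal $\zero_n$. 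Hence \Cref{prop:mon-cat-unique-tensor-decomposition} applies and yields a unique expression $f = f_1 \tensor \cdots \tensor f_k$ in which each $f_i$ is not $\tensor$-decomposable.

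Now, with this unique decomposition in hand, the upper bound $\mwd(f) \leq \max\{r_1,\ldots,r_k\}+1$ is the first statement of \Cref{cor:mwd-matrices-blocks} applied directly. For the lower bound $\max\{r_1,\ldots,r_k\} \leq \mwd(f)$, since each $f_i$ is not $\tensor$-decomposable by construction, the second statement of \Cref{cor:mwd-matrices-blocks} gives exactly $\max_i \rank(\BialgToMat(f_i)) \leq \mwd(f)$, which is the desired inequality.

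The potential subtlety, and really the only thing to be careful about, is the verification that $\Bialg$'s unit is initial and terminal: one must invoke the bialgebra axioms to justify uniqueness of morphisms into and out of $0$, rather than simply pointing to the existence of $\zero$ and $\delete$. Everything else is an immediate application of previously established results.
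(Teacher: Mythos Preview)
Your proposal is correct and follows essentially the same route as the paper: verify that \(\Bialg\) satisfies the hypotheses of \Cref{prop:mon-cat-unique-tensor-decomposition} (objects form a free monoid, hence a unique factorisation monoid, and \(0\) is initial and terminal), then apply \Cref{cor:mwd-matrices-blocks} to the resulting unique \(\tensor\)-decomposition into non-\(\tensor\)-decomposable factors. The only difference is that you spell out the verification of initiality and terminality of \(0\) in more detail than the paper does.
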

\begin{proof}
  This result is obtained by applying \Cref{cor:mwd-matrices-blocks} to the \(\tensor\)-decomposition given by \Cref{prop:mon-cat-unique-tensor-decomposition}, which can be applied because, in \(\Bialg\), \(0\) is both terminal and initial, and the objects, being a free monoid, are a unique factorization monoid.
\end{proof}
Note that the identity matrix 
has monoidal width \(1\)
and twice the identity matrix 
has monoidal width \(2\), attaining both the upper and lower bounds for the monoidal width of a matrix.

\section{Graphs and rank width}\label{sec:rank-width}
Here we recall rank width~\cite{oum2006rank-width} for undirected graphs.
\begin{definition}
  An \emph{undirected graph} \(G = \mathgraph[,\edgeendsfun]{E}{V}\) is given by a set of edges \(E\), a set of vertices \(V\) and a function \(\edgeendsfun \colon E \to \parti_{\leq 2}(V)\) that gives the endpoints of each edge.
  We consider graphs \emph{up to isomorphism}, or \emph{abstract graphs}, thus the set of vertices can be fully characterised by its cardinality.
  An abstract graph can be equivalently given by an adjacency matrix \(\adjeqclass{G}\), where \(G \in \MatN(n,n)\) and \(n\) is the number of vertices.
  The equivalence class of adjacency matrices is defined by the equivalence relation
  \[G \adjeqrel H \quad \text{ iff } \quad G + \transpose{G} = H + \transpose{H}.\]
\end{definition}
We will refer to abstract undirected graphs as simply graphs.
\begin{definition}
  A \emph{path} in a graph \(G\) is a sequence of edges \((e_{1}, \dots, e_{k})\) together with a sequence of distinct vertices \((v_{1},\dots,v_{k+1})\) of \(G\) such that, for every \(i = 1,\dots,k\), \(\edgeends{e_{i}} = \{v_{i}, v_{i+1}\}\).
  A \emph{tree} is a graph such that there is a unique path between any two of its vertices.
  Two vertices \(v\) and \(w\) in a graph \(G\) are \emph{neighbours} if \(G\) has an edge between them.
  The \emph{leaves} of a tree are those vertices with at most one neighbour.
  A \emph{subcubic tree} is a tree where each vertex has between one and three neighbours.
\end{definition}

A rank decomposition for a graph \(G\) is a tree whose leaves are labelled with the vertices of \(G\).
\begin{definition}[~\cite{oum2006rank-width}]
  A \emph{rank decomposition} \((Y,r)\) of a graph \(G\) is given by a subcubic tree \(Y\) together with a bijection \(r \colon \leaves(Y) \to \vertices(G)\).
\end{definition}
Each edge \(b\) in the tree \(Y\) determines a splitting of the graph: it determines a two partition of the leaves of \(Y\), which, through \(r\), determines a two partition \(\{A_b,B_b\}\) of the vertices of \(G\).
This corresponds to a splitting of the graph \(G\) into two subgraphs \(G_{1}\) and \(G_{2}\).
Intuitively, the order of an edge \(b\) is the amount of information required to recover \(G\) by joining \(G_{1}\) and \(G_{2}\).
Given the partition \(\{A_b,B_b\}\) of the vertices of \(G\), we can record the edges in \(G\) beween \(A_{b}\) and \(B_{b}\) in a matrix \(X_{b}\).
This means that, if \(v_{i} \in A_{b}\) and \(v_{j} \in B_{b}\), the entry \((i,j)\) of the matrix \(X_{b}\) is the number of edges between \(v_{i}\) and \(v_{j}\).

\begin{definition}[Order of an edge]\label{def:edge-order}
  Let \((Y,r)\) be a rank decomposition of a graph \(G\).
  Let \(b\) be an edge of \(Y\).
  The order of \(b\) is the rank of the matrix associated to it: \(\edgeorder(b) \defn \rank(X_{b})\).
\end{definition}
Note that the order of the two sets in the partition does not matter as the rank is invariant to transposition.
The width of a rank decomposition is the maximum order of the edges of the tree and the rank width of a graph is the width of its cheapest decomposition.
\begin{definition}[Rank width]
  Given a rank decomposition \((Y,r)\) of a graph \(G\), define its width as \(\decwidth(Y,r) \defn \max_{b \in \edges(Y)} \edgeorder(b)\).
  The \emph{rank width} of \(G\) is given by the min-max formula:
  \[\rankwidth(G) \defn \min_{(Y,r)} \decwidth(Y,r).\]
\end{definition}

\subsection{Graphs with dangling edges}
As intermediate step between rank decompositions and monoidal decompositions, we introduce recursive rank decompositions of \emph{graphs with dangling edges} and we prove that they give a notion of width that is equivalent to rank width.
Similar recursive characterisations were done for tree decompositions in~\cite{arnborg1993algebraic} and for path and branch decompositions in~\cite{2022monoidalwidth}.
We first need a notion of graph that is equipped with some \virgolette{open} edges along which it can be glued with other graphs.
\begin{definition}\label{def:dangling-graph}
  A \emph{graph with dangling edges} \(\Gamma = \danglinggraph{G}{B}\) is given by an adjacency matrix \(G \in \MatN(k,k)\) that records the connectivity of the graph and a matrix \(B \in \MatN(k,n)\) that records the ``dangling edges'' connected to $n$ boundary ports.
  We will sometimes write \(G \in \adjacency(\Gamma)\) and \(B = \boundary(\Gamma)\).
\end{definition}

\begin{example}\label{ex:graph-dangling-edges-glueing}
  Two graphs with the same ports, as illustrated below, can be \virgolette{glued} together: 
  \begin{center}
    \danglinggraphIdeaExFigOne{} glued with \danglinggraphIdeaExFigTwo{} gives \danglinggraphIdeaExFigThree{}
  \end{center}
\end{example}
Decompositions are elements of a tree data type, with nodes carrying subgraphs \(\Gamma'\) of the ambient graph \(\Gamma\).
In the following \(\Gamma'\) ranges over the non-empty subgraphs of \(\Gamma\):
\(T_{\Gamma} \ \Coloneqq \ \leafgenerator{\Gamma'} \ \mid \ \nodegenerator{T_{\Gamma}}{\Gamma'}{T_{\Gamma}}\).
Given $T\in T_\Gamma$, the label function $\labelling$ takes a decomposition and returns the graph with dangling edges at the root: $\labelling\nodegenerator{T_1}{\Gamma}{T_2} \defn \Gamma$ and \(\labelling\leafgenerator{\Gamma} \defn \Gamma\).
\begin{definition}[Recursive rank decomposition]\label{def:rec-rank-dec}
  Let \(\Gamma = \danglinggraph{G}{B}\) be a graph with dangling edges, where \(G \in  \MatN(k,k)\) and \(B \in \MatN(k,n)\).
  A recursive rank decomposition of \(\Gamma\) is \(T \in T_{\Gamma}\) where either:
  \(\Gamma\) has at most one vertex and \(T = \leafgenerator{\Gamma}\);
  or \(T = \nodegenerator{T_1}{\Gamma}{T_2}\) and \(T_i \in T_{\Gamma_i}\) are recursive rank decompositions of subgraphs \(\Gamma_i = \danglinggraph{G_i}{B_i}\) of \(\Gamma\) such that:
  \begin{itemize}
    \item The vertices are partitioned in two, \(\adjeqclass{G} = \adjeqclass{\begin{psmallmatrix} G_1 & C \\ \zeromat & G_2 \end{psmallmatrix}}\);
    \item The dangling edges are those to the original boundary and to the other subgraph, \(B_1 = (A_1 \mid C)\) and \(B_2 = (A_2 \mid \transpose{C})\), where \(B = \begin{psmallmatrix} A_1 \\ A_2 \end{psmallmatrix}\).
  \end{itemize}
\end{definition}
As with before, the \emph{recursive} rank width of a graph is the width of its cheapest decomposition.
\begin{definition}
  Let \(T\) be a recursive rank decomposition of \(\Gamma = \danglinggraph{G}{B}\).
  Define the width of \(T\) recursively:
  if \(T = \leafgenerator{\Gamma}\), \(\decwidth(T) \defn \rank(B)\),
  and, if \(T = \nodegenerator{T_{1}}{\Gamma}{T_{2}}\), \(\decwidth(T) \defn \max\{\decwidth(T_1), \decwidth(T_2), \rank(B)\}\)
  Expanding this expression, we obtain
  \(\decwidth(T) = \max_{T' \text{ subtree of } T} \rank(\boundary(\labelling(T')))\).
  The \emph{recursive rank width} of \(\Gamma\) is defined by the min-max formula
  \(\rrankwidth(\Gamma) \defn \min_{T} \decwidth(T)\).
\end{definition}
We show that recursive rank width is the same as rank width, up to the rank of the boundary of the graph.
\begin{proposition}\label{prop:recursive-rwd-upper-bound}
  Let \(\Gamma = \danglinggraph{G}{B}\) be a graph with dangling edges and \((Y,r)\) be a rank decomposition of \(G\).
  Then, there is a recursive rank decomposition \(\toRecursiveDec(Y,r)\) of \(\Gamma\) s.t. \(\decwidth(\toRecursiveDec(Y,r)) \leq \decwidth(Y,r) + \rank(B)\).
\end{proposition}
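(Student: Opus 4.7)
The plan is to build $\toRecursiveDec(Y,r)$ by mirroring a rooted binary tree obtained from $Y$, then to bound every boundary rank by $\rank(B) + \decwidth(Y,r)$ using subadditivity of rank.

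First I would convert $Y$ into a rooted binary tree $\widehat{Y}$: since $Y$ is subcubic, subdividing an arbitrary edge $b^{\ast}$ by a new vertex and taking that vertex as the root yields a binary tree whose leaves coincide with those of $Y$ and hence biject with $V(G)$ via $r$. I would then define $\toRecursiveDec(Y,r)$ by recursion on $\widehat{Y}$. For each node $v$ of $\widehat{Y}$, let $V_v \subseteq V(G)$ denote the set of vertices corresponding to the leaves in the subtree rooted at $v$. A leaf of $\widehat{Y}$ gives $|V_v|=1$, matching the base case of \Cref{def:rec-rank-dec}; at an internal node with children $v_1,v_2$ the partition $V_v = V_{v_1} \sqcup V_{v_2}$ is exactly the one required, and ordering the vertices of $V_v$ with $V_{v_1}$ preceding $V_{v_2}$ gives a representative of the adjacency equivalence class of $G|_{V_v}$ in the block-upper-triangular form $\begin{psmallmatrix} G_{v_1} & C \\ \zeromat & G_{v_2} \end{psmallmatrix}$ demanded by the definition.

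The key step is the rank bound. By induction on depth in $\widehat{Y}$ I would show that, up to column reordering, the boundary matrix attached at node $v$ has the form $B_v = (\, B|_{V_v} \mid X_v \,)$, where $B|_{V_v}$ is the row-restriction of the original $B$ to $V_v$ and $X_v$ is the $V_v \times (V \setminus V_v)$ block of the adjacency matrix $G$. Up to transposition, $X_v$ coincides with the matrix $X_{b_v}$ associated by \Cref{def:edge-order} with the unique edge $b_v$ of $Y$ that separates $V_v$ from its complement. Since $B|_{V_v}$ is a submatrix of $B$ and $\rank(X_v) = \edgeorder(b_v) \leq \decwidth(Y,r)$, subadditivity of rank under horizontal concatenation yields
\[
\rank(B_v) \;\leq\; \rank(B|_{V_v}) + \rank(X_v) \;\leq\; \rank(B) + \decwidth(Y,r).
\]
Taking the maximum over all $v$ then gives the claimed bound on $\decwidth(\toRecursiveDec(Y,r))$.

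The main delicate point is the inductive bookkeeping that identifies $B_v$ with $(\, B|_{V_v} \mid X_v \,)$: each ancestor split in the recursive construction appends a new block of columns recording connections between $V_v$ and the ``other side'' of that particular split, and one must check that these successively appended blocks, when accumulated along the path from the root of $\widehat{Y}$ down to $v$, together fill out exactly $X_v$, while the original-port columns telescope down to the row-restriction $B|_{V_v}$. Once this telescoping is in place, the rank inequality follows immediately from elementary linear algebra.
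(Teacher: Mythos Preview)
Your proposal is correct and follows essentially the same route as the paper: the telescoping identification of $B_v$ with $(B|_{V_v} \mid X_v)$ is precisely the content of \Cref{lemma:closed-expr-boundary}, after which the paper applies the same rank-subadditivity bound you give. The only organisational difference is that the paper inducts directly on $\card{\edges(Y)}$ by splitting $Y$ along an edge into two subcubic subtrees, rather than first rooting $Y$; this incidentally sidesteps the minor wrinkle that a subcubic tree may contain degree-$2$ internal vertices, so your rooted $\widehat{Y}$ need not be strictly binary without first suppressing those.
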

Before proving the lower bound for recursive rank width, we need a technical lemma that relates the width of a graph with that of its subgraphs.
\begin{lemma}\label{lemma:closed-expr-boundary}
  Let \(T\) be a recursive rank decomposition of \(\Gamma = \danglinggraph{G}{B}\).
  Let \(T'\) be a subtree of \(T\) and \(\Gamma' \defn \labelling(T')\) with \(\Gamma'= \danglinggraph{G'}{B'}\).
  The adjacency matrix of \(\Gamma\) can be written as \(\adjeqclass{G} = \adjeqclass{\begin{psmallmatrix} G_L & C_L & C \\ \zeromat & G' & C_R \\ \zeromat & \zeromat & G_R \end{psmallmatrix}}\) and its boundary as \(B = \begin{psmallmatrix} A_L \\ A' \\ A_R \end{psmallmatrix}\).
  Then, \(\rank(B') = \rank(A' \mid \transpose{C_L} \mid C_R)\).
\end{lemma}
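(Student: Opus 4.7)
The plan is to proceed by structural induction on the recursive rank decomposition $T$, keeping the subtree $T'$ floating and identifying the block structure of the global adjacency with the one produced by the inductive hypothesis. The key observation is that the ``closed form'' expression $(A' \mid \transpose{C_L} \mid C_R)$ collects \emph{all} edges leaving $\Gamma'$—to the original boundary, to vertices occurring earlier in the recursion, and to vertices occurring later—and that the boundary matrix $B'$ built along the recursion contains exactly the same columns, only grouped differently.

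In the base case $T = \leafgenerator{\Gamma}$, necessarily $T' = T$, so $\Gamma' = \Gamma$. In the closed form of the statement, $G_L$ and $G_R$ are empty, so $C_L$, $C$ and $C_R$ have no rows (or no columns), and $A' = B$. The claim reduces to $\rank(B') = \rank(B)$, which is immediate.

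For the inductive step, write $T = \nodegenerator{T_1}{\Gamma}{T_2}$ with the induced splitting $G = \begin{psmallmatrix} G_1 & C_{12} \\ \zeromat & G_2\end{psmallmatrix}$, $B = \begin{psmallmatrix}A_1\\A_2\end{psmallmatrix}$, $B_1 = (A_1 \mid C_{12})$ and $B_2 = (A_2 \mid \transpose{C_{12}})$ coming from Definition~\ref{def:rec-rank-dec}. If $T' = T$ we are in the base case; otherwise, by symmetry, assume $T'$ is a subtree of $T_1$. Applying the inductive hypothesis to $T_1$ and $T'$ gives a decomposition of $\Gamma_1$ of the form $G_1 = \begin{psmallmatrix}\widetilde G_L & \widetilde C_L & \widetilde C \\ \zeromat & G' & \widetilde C_R \\ \zeromat & \zeromat & \widetilde G_R\end{psmallmatrix}$ and $B_1 = \begin{psmallmatrix}\widetilde A_L\\ \widetilde A'\\ \widetilde A_R\end{psmallmatrix}$, with $\rank(B') = \rank(\widetilde A' \mid \transpose{\widetilde C_L} \mid \widetilde C_R)$. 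Splitting $A_1$ and $C_{12}$ along the same three row-blocks, write $A_1 = \begin{psmallmatrix}A_{1,L}\\ A_{1,'}\\ A_{1,R}\end{psmallmatrix}$ and $C_{12} = \begin{psmallmatrix}C_{12,L}\\ C_{12,'}\\ C_{12,R}\end{psmallmatrix}$, so that $\widetilde A' = (A_{1,'} \mid C_{12,'})$ (and similarly for $\widetilde A_L, \widetilde A_R$).

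Substituting this local decomposition back into $G = \begin{psmallmatrix} G_1 & C_{12} \\ \zeromat & G_2\end{psmallmatrix}$ and regrouping the last two block-columns into a single $G_R$-column, one reads off the global blocks as $G_L = \widetilde G_L$, $C_L = \widetilde C_L$, $C = (\widetilde C \mid C_{12,L})$, $C_R = (\widetilde C_R \mid C_{12,'})$, $G_R = \begin{psmallmatrix}\widetilde G_R & C_{12,R}\\ \zeromat & G_2\end{psmallmatrix}$, and $A' = A_{1,'}$. Therefore
\[
(A' \mid \transpose{C_L} \mid C_R) \;=\; (A_{1,'} \mid \transpose{\widetilde C_L} \mid \widetilde C_R \mid C_{12,'}),
\]
which is a column permutation of $((A_{1,'} \mid C_{12,'}) \mid \transpose{\widetilde C_L} \mid \widetilde C_R) = (\widetilde A' \mid \transpose{\widetilde C_L} \mid \widetilde C_R)$. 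Since rank is invariant under column permutations, the inductive hypothesis gives $\rank(A' \mid \transpose{C_L} \mid C_R) = \rank(B')$. The case where $T'$ lies in $T_2$ is symmetric, using $\transpose{C_{12}}$ where appropriate and exploiting that the adjacency class is stable under transposition of off-diagonal blocks. The main bookkeeping obstacle is precisely this: keeping straight which of the local blocks $\widetilde C, \widetilde C_L, \widetilde C_R$ merge with which pieces of $C_{12}$ when passing from the local ($\Gamma_1$) picture to the global ($\Gamma$) picture, but once the column-permutation observation above is in hand the rank identity is automatic.
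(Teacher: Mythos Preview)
Your proposal is correct and follows essentially the same approach as the paper: structural induction on $T$, reducing to the case $T'\subseteq T_1$, applying the inductive hypothesis to $\Gamma_1$, and then observing that the global $(A'\mid \transpose{C_L}\mid C_R)$ is a column permutation of the local one coming from $B_1=(A_1\mid C_{12})$. The paper's proof is terser (it does not explicitly name all the intermediate blocks you do), but the content is identical.
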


\begin{proposition}\label{prop:recursive-rwd-lower-bound}
  Let \(T\) be a recursive rank decomposition of \(\Gamma = \danglinggraph{G}{B}\) with \(G \in \MatN(k,k)\) and \(B \in \MatN(k,n)\).
  Then, there is a rank decomposition \(\fromRecursiveDec(T)\) of \(G\) such that \(\decwidth(\fromRecursiveDec(T)) \leq \decwidth(T)\).
\end{proposition}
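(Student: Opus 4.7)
The plan is to build $\fromRecursiveDec(T) = (Y, r)$ by induction on $T$, giving $Y$ the same underlying tree shape as $T$: each leaf of $T$, which carries a single vertex $v$ of $G$, becomes a leaf of $Y$ with $r$ recording $v$, and each internal node of $T$ becomes an internal node of $Y$. For the base case $T = \leafgenerator{\Gamma}$, the decomposition is the single-vertex tree, which has no edges and therefore has width $0 \leq \rank(B) = \decwidth(T)$. For $T = \nodegenerator{T_1}{\Gamma}{T_2}$, I recursively construct $(Y_i, r_i) = \fromRecursiveDec(T_i)$ and glue $Y_1$ and $Y_2$ by introducing a fresh internal node $v^\ast$ connected to a designated root of each. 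Maintaining the invariant that each root has degree at most two ensures that $Y$ stays subcubic, and $r$ is the evident disjoint union of $r_1$ and $r_2$.

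For the width bound the key observation is that every edge $b$ of $Y$ arises from a subtree $T'$ of $T$: removing $b$ splits the leaves of $Y$ into two sets, one of which equals $V(G')$ where $\Gamma' = \labelling(T') = \danglinggraph{G'}{B'}$. The order $\edgeorder(b)$ is the rank of the matrix $X_b$ recording edges of $G$ between $V(G')$ and its complement. I plan to read $X_b$ off the block presentation supplied by \Cref{lemma:closed-expr-boundary}: with $\adjeqclass{G} = \adjeqclass{\begin{psmallmatrix} G_L & C_L & C \\ \zeromat & G' & C_R \\ \zeromat & \zeromat & G_R \end{psmallmatrix}}$, the off-block entries $C_L$ and $C_R$ are precisely the edges of $G$ between $V(G')$ and $V_L \cup V_R$, so $X_b = (\transpose{C_L} \mid C_R)$ up to the adjacency equivalence. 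The lemma then gives $\rank(X_b) \leq \rank(A' \mid \transpose{C_L} \mid C_R) = \rank(B') \leq \decwidth(T)$, and taking the maximum over edges of $Y$ yields the required inequality $\decwidth(\fromRecursiveDec(T)) \leq \decwidth(T)$.

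The main obstacle I anticipate is the bookkeeping around the tree-to-tree correspondence: one must verify that every edge of $Y$ really does correspond to a subtree of $T$ whose labeled subgraph provides one side of the induced partition, even for edges deeply nested inside $Y_1$ or $Y_2$ where the surrounding context can include vertices from both $V(G_1)$ and $V(G_2)$. Once this correspondence is pinned down, \Cref{lemma:closed-expr-boundary} does essentially all the work, since the edge-cut matrix $X_b$ is a column-submatrix of the boundary matrix $B'$ whose rank it therefore controls.
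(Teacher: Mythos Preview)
Your proposal is correct and follows essentially the same approach as the paper: the paper simply takes $Y$ to be the unlabelled binary tree underlying $T$ (noting that a binary tree is subcubic), observes that each edge $b$ of $Y$ singles out the subtree $T_b$ rooted at its child endpoint, and then applies \Cref{lemma:closed-expr-boundary} exactly as you do to get $\edgeorder(b) = \rank(\transpose{C_L} \mid C_R) \leq \rank(A' \mid \transpose{C_L} \mid C_R) = \rank(B_b)$. Your inductive gluing with a fresh node $v^\ast$ reconstructs the same tree; the paper just skips that bookkeeping by taking the underlying tree directly, which also makes the edge--subtree correspondence you flag as the main obstacle immediate.
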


From \Cref{prop:recursive-rwd-lower-bound} and \Cref{prop:recursive-rwd-upper-bound} we conclude the following result.

\begin{theorem}\label{th:recursive-rwd}
  Let \(\Gamma = \danglinggraph{G}{B}\).
  Then,
  \(\rankwidth(G) \leq \rrankwidth(\Gamma) \leq \rankwidth(G) + \rank(B)\).
\end{theorem}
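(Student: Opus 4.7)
The proof is a direct min--max consequence of the two propositions immediately preceding the statement, so the plan is simply to chase the inequalities through the constructions $\toRecursiveDec$ and $\fromRecursiveDec$ and take infima on both sides.

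For the upper bound $\rrankwidth(\Gamma) \leq \rankwidth(G) + \rank(B)$, I would start from an arbitrary rank decomposition $(Y,r)$ of the abstract graph $G$ underlying $\Gamma$. By \Cref{prop:recursive-rwd-upper-bound}, the construction $\toRecursiveDec(Y,r)$ produces a recursive rank decomposition of $\Gamma$ of width at most $\decwidth(Y,r) + \rank(B)$. Since $\rrankwidth(\Gamma)$ is the minimum width over all recursive rank decompositions, we obtain $\rrankwidth(\Gamma) \leq \decwidth(\toRecursiveDec(Y,r)) \leq \decwidth(Y,r) + \rank(B)$ for every $(Y,r)$. Taking the infimum over $(Y,r)$ on the right-hand side yields $\rrankwidth(\Gamma) \leq \rankwidth(G) + \rank(B)$, since the additive term $\rank(B)$ does not depend on $(Y,r)$.

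For the lower bound $\rankwidth(G) \leq \rrankwidth(\Gamma)$, I would dually start from an arbitrary recursive rank decomposition $T$ of $\Gamma$. By \Cref{prop:recursive-rwd-lower-bound}, $\fromRecursiveDec(T)$ is a rank decomposition of $G$ satisfying $\decwidth(\fromRecursiveDec(T)) \leq \decwidth(T)$. Since $\rankwidth(G)$ is the minimum width of a rank decomposition of $G$, we have $\rankwidth(G) \leq \decwidth(\fromRecursiveDec(T)) \leq \decwidth(T)$ for every $T$, and taking the infimum over $T$ gives $\rankwidth(G) \leq \rrankwidth(\Gamma)$.

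There is essentially no obstacle here: both propositions do the real work, and the only subtlety worth flagging is to verify that the constructions $\toRecursiveDec$ and $\fromRecursiveDec$ are defined on \emph{all} rank (respectively recursive rank) decompositions, so that the minima are taken over non-empty sets — this is immediate since any graph with at least one vertex admits at least one subcubic tree labelling, and the leaf decomposition $\leafgenerator{\Gamma}$ always lies in $T_\Gamma$. Thus the combined chain of inequalities gives $\rankwidth(G) \leq \rrankwidth(\Gamma) \leq \rankwidth(G) + \rank(B)$, as required.
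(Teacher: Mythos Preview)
Your proposal is correct and is exactly the intended argument: the paper itself does not spell out a proof but simply states that the theorem follows from \Cref{prop:recursive-rwd-upper-bound} and \Cref{prop:recursive-rwd-lower-bound}, and your min--max chase through $\toRecursiveDec$ and $\fromRecursiveDec$ is the routine unpacking of that claim. The remark about non-emptiness of the decomposition sets is a reasonable sanity check but not something the paper addresses explicitly.
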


\section{Monoidal width and rank width}\label{sec:mwd-rwd}
This section contains our main results.
We prove that monoidal width in the prop of graphs \(\propGraph\)~\cite{chantawibul2015compositionalgraphtheory} corresponds to rank width, up to a constant multiplicative factor of \(2\).

We start by introducing the algebra of graphs with boundaries
and its diagrammatic syntax~\cite{networkGamesCSL}.
A graph with boundaries is a graph together with two matrices \(L\) and \(R\) that record the connectivity of the vertices with the left and right boundary, a matrix \(P\) that records the passing wires from the left boundary to the right one and a matrix \(F\) that records the wires from the right boundary to itself.
\begin{definition}[\cite{networkGamesCSL}]
  A \emph{graph with boundaries} \(g \colon n \to m\) is defined as \(g = \fullboundariesgraph{G}{L}{R}{P}{F}\), where \(\adjeqclass{G}\) is the adjacency matrix of a graph on \(k\) vertices, with \(G \in \MatN(k,k)\); \(L \in \MatN(k,n)\),  \(R \in \MatN(k,m)\), \(P \in \MatN(m,n)\) and \(F \in \MatN(m,m)\) recording connectivity information as explained above.
  Graphs with boundaries are taken up to an equivalence making the order of the vertices immaterial.
  Let \(g, g' \colon n \to m\) on \(k\) vertices, with \(g = \fullboundariesgraph{G}{L}{R}{P}{F}\) and \(g' = \fullboundariesgraph{G'}{L'}{R'}{P}{F}\).
  The graphs \(g\) and \(g'\) are considered equal iff
  there is a permutation matrix \(\sigma \in \MatN(k,k)\) such that
  \(g' = \fullboundariesgraph{\sigma G \transpose{\sigma}}{\sigma L}{\sigma R}{P}{F}\).
\end{definition}
Graphs with boundaries can be composed sequentially and in parallel~\cite{networkGamesCSL}, forming a symmetric monoidal category \(\catGraph\).
The prop \(\propGraph\) provides a convenient syntax for graphs with boundaries.
It is obtained by adding a cup and a vertex generators to the prop of matrices \(\Bialg\) (\Cref{fig:bialgebra}).
\begin{definition}[\cite{chantawibul2015compositionalgraphtheory}]
  The prop of graphs \(\propGraph\) is obtained by adding to \(\Bialg\) the generators \(\cup \colon 0 \to 2\) and \(\vertex \colon 1 \to 0\) with the equations below.
  \[\propGraphEqFig{}\]
\end{definition}
These equations mean, in particular, that the cup transposes matrices
(\Cref{fig:adding-cup}, left) and that we can express the equivalence relation of adjacency matrices: \(G \adjeqrel H\) iff \(G + \transpose{G} = H + \transpose{H}\) (\Cref{fig:adding-cup}, right).
\begin{figure}[h!]
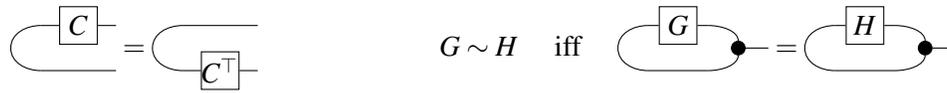

  \centering
  \(\cupTransposeFig{}\) \qquad\qquad\qquad\(\adjeqrelFig{}\)
  \caption{Adding the cup.}\label{fig:adding-cup}
\end{figure}

\begin{proposition}[\cite{networkGamesCSL}, Theorem 23]\label{prop:normal-form-prop-graphs}
  The prop of graphs \(\propGraph\) is isomorphic to the prop \(\catGraph\).
\end{proposition}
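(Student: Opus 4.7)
The plan is to exhibit explicit functors of props in both directions and verify that they are mutually inverse. Both props have the natural numbers as objects with addition as monoidal product, so the functors will be identity on objects and the task reduces to defining a map on generators, checking well-definedness, and proving a normal form theorem on the syntactic side.

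First I would build a prop morphism \(\Phi \colon \propGraph \to \catGraph\) by specifying it on generators. The bialgebra generators \(\cp, \delete, \add, \zero\) are sent to graphs with boundaries that have no vertices (\(k=0\)) and whose \(P\) matrix is the matrix corresponding to the generator under the isomorphism \(\Bialg \cong \Mat_{\naturals}\), with \(G, L, R, F\) trivial. The cup \(\cup \colon 0 \to 2\) is sent to the graph with no vertices, \(P = \zeromat\), and \(F = \begin{psmallmatrix} 0 & 1 \\ 0 & 0 \end{psmallmatrix}\), realising a single undirected wire between the two right-boundary ports. The vertex generator \(\vertex \colon 1 \to 0\) is sent to the graph with one isolated vertex, \(L = (1)\), and all other data trivial. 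I would then verify that every bialgebra axiom and every extra equation defining \(\propGraph\) is satisfied in \(\catGraph\): this is a finite diagrammatic check using the composition rules of graphs with boundaries.

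For the inverse direction I would construct a normal form. Given \(g = \fullboundariesgraph{G}{L}{R}{P}{F}\) on \(k\) vertices, I would exhibit a term \(N(g) \in \propGraph\) built as follows: take \(k\) copies of \(\vertex\) placed in parallel, precomposed with copy/add nets that realise the incidence information encoded by \(L\), \(R\), and \(G\); in parallel with this, put the passing-wire matrix \(P\) written as a \(\Bialg\)-term via \(\Bialg \cong \Mat_{\naturals}\); finally, attach cups to implement \(F\) and the symmetric contributions from \(G + \transpose{G}\), which is exactly what the equations of \Cref{fig:adding-cup} legitimise. Well-definedness on the equivalence classes (permutation of vertices, and \(G \adjeqrel H\)) uses precisely these two equations together with the cocommutative comonoid axioms on \(\cp\).

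The main obstacle is completeness: showing that every syntactic term of \(\propGraph\) can be rewritten to this normal form, i.e.\ that \(N \circ \Phi = \id\) on morphisms. The strategy is to (i) use the cup equations to slide all \(\cup\) generators to the right of the diagram, absorbing them into an \(F\)-block together with the transposed halves of edges; (ii) use the vertex equations to gather all \(\vertex\) generators into a single parallel column flanked by copy/add nets; (iii) apply the completeness of \(\Bialg\) for \(\Mat_{\naturals}\) to the remaining matrix part. The reverse identity \(\Phi \circ N = \id\) is then immediate by reconstructing \(g\) block by block from the composition formulas of \(\catGraph\). Since the statement is attributed to Theorem 23 of~\cite{networkGamesCSL}, the detailed rewriting argument underlying step (iii) can be cited from that source.
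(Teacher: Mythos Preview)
The paper does not prove this proposition: it is stated with a citation to Theorem~23 of~\cite{networkGamesCSL} and used as a black box, so there is no argument in the present paper to compare against. Your sketch is therefore strictly more than what the paper provides, and its shape (define a functor on generators, check relations, exhibit a normal form, use completeness of \(\Bialg\) for the matrix part) is the standard route one would expect the cited source to take; since the proposition is imported wholesale, simply citing~\cite{networkGamesCSL} as you do at the end is already sufficient here.
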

\Cref{prop:normal-form-prop-graphs} means that the morphisms in \(\propGraph\) can be written in the following normal form
\[\morphismpropgraphExFig{}.\]

The prop \(\propGraph\) is more expressive than graphs with dangling edges (\Cref{def:dangling-graph}): its morphisms can have edges between the boundaries as well.
In fact, graphs with dangling edges can be seen as morphisms \(n \to 0\) in \(\propGraph\).

\begin{example}
  A graph with dangling edges \(\Gamma = \danglinggraph{G}{B}\) can be represented as a morphism in \(\propGraph\)
  \[g = \fullboundariesgraph{G}{B}{\initmap}{\finmap}{\emptymat} = \danglinggraphExFig{}\quad.\]
  We can now formalise the intuition of glueing graphs with dangling edges as explained in \Cref{ex:graph-dangling-edges-glueing}.
  The two graphs there correspond to \(g_{1}\) and \(g_{2}\) below left and middle.
  Their glueing is obtained by precomposing their monoidal product with a cup, i.e. \ \(\cup_{2} \dcomp (g_{1} \tensor g_{2})\), as shown below right.
  \[\danglinggraphGlueingExFig\]
\end{example}

\subsection{Rank width in open graphs}
The technical content of our main result (\Cref{th:mwd-rwd}) is split in two: an upper  and a lower bound.

As in the prop of matrices \(\Bialg\), the cost of composing along \(n\) wires is \(n\).
All morphisms in \(\propGraph\) are chosen as atomic.
One could restrict this to those with at most one vertex without affecting the results.
\begin{definition}
  Let the set of \emph{atomic morphisms} \(\decgenerators\) be the set of all the morphisms of \(\propGraph\).
  The \emph{weight function} \(\nodeweight \colon \decgenerators \union \monoidaloperations{\propGraph} \to \naturals\) is defined, on objects \(n\), as \(\nodeweight(\dcompnode{n}) \defn n\); and, on morphisms \(g \in \decgenerators\), as \(\nodeweight(g) \defn k\), where \(k\) is the number of vertices of \(g\).
\end{definition}
Note that the monoidal width of \(g\) is bounded by the number of its vertices.

The upper bound (\Cref{prop:mwd-rwd-upper-bound}) is established by associating to each recursive rank decomposition a suitable monoidal decomposition.
This mapping is defined inductively, given the inductive nature of both these structures.
Given a recursive rank decomposition of a graph \(\Gamma\), we can construct a decomposition of its corresponding morphism \(g\) as shown by the first equality in \Cref{fig:mwd-rwd-upper-bound}.
\begin{figure}[h!]
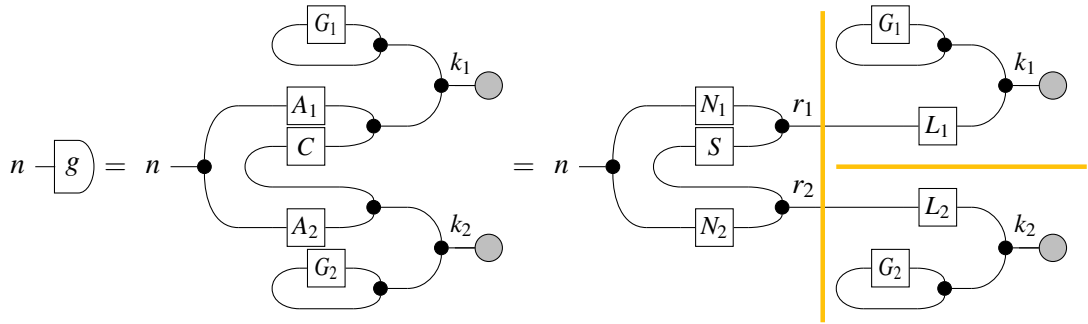

  \[\mwdrwdupperproofFigOneCuts{}\]
  \caption{First step of a monoidal decomposition given by a recursive rank decomposition}\label{fig:mwd-rwd-upper-bound}
\end{figure}
However, this decomposition is not optimal as it cuts along the number of vertices \(k_{1} + k_{2}\).
But we can do better thanks to \Cref{lemma:cut-along-ranks}, which shows that we can cut along the ranks, \(r_1 = \rank(A_1 \mid C)\) and \(r_2 = \rank(A_2 \mid \transpose{C})\), of the boundaries of the induced subgraphs to obtain the second equality in \Cref{fig:mwd-rwd-upper-bound}.
\[\lemmacutalongranksIdeaFig{}\]
\begin{lemma}\label{lemma:cut-along-ranks}
  Let \(A_i \in \MatN(k_i,n)\), for \(i = 1,2\), and \(C \in \MatN(k_1,k_2)\).
  Then, there are rank decompositions of \((A_1 \mid C)\) and \((A_2 \mid \transpose{C})\) of the form \((A_1 \mid C) = L_1 \cdot (N_1 \mid S \cdot \transpose{L_2})\), and \((A_2 \mid \transpose{C}) = L_2 \cdot (N_2 \mid \transpose{S} \cdot \transpose{L_1})\).
\end{lemma}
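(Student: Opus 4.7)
The plan is to construct the three factors $L_1$, $L_2$, and $S$ sequentially, via two rank factorizations chained together rather than two independent rank decompositions that would later need reconciling. First, I would take a rank decomposition $(A_1 \mid C) = L_1 \cdot (N_1 \mid K_1)$, where $L_1 \in \MatN(k_1, r_1)$ with $r_1 = \rank(A_1 \mid C)$. This already delivers $L_1$ together with the subsidiary identity $C = L_1 \cdot K_1$ that will allow us to transport $L_1$ across to the second factorization.

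Next, I would form the augmented matrix $(A_2 \mid \transpose{K_1}) \in \MatN(k_2,\, n + r_1)$ and take a rank decomposition $(A_2 \mid \transpose{K_1}) = L_2 \cdot (N_2 \mid \transpose{S})$. This produces $L_2$ and $S$ simultaneously: from $\transpose{K_1} = L_2 \cdot \transpose{S}$ one obtains $K_1 = S \cdot \transpose{L_2}$, and substituting back into the first decomposition yields the first desired form $(A_1 \mid C) = L_1 \cdot (N_1 \mid S \cdot \transpose{L_2})$. Transposing the resulting identity $C = L_1 \cdot S \cdot \transpose{L_2}$ gives $\transpose{C} = L_2 \cdot \transpose{S} \cdot \transpose{L_1}$, and paired with $A_2 = L_2 \cdot N_2$ this produces the second form $(A_2 \mid \transpose{C}) = L_2 \cdot (N_2 \mid \transpose{S} \cdot \transpose{L_1})$.

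The main obstacle is to verify that the second factorization really is a rank decomposition, i.e., that its inner dimension equals $r_2 \defn \rank(A_2 \mid \transpose{C})$ rather than merely $\rank(A_2 \mid \transpose{K_1})$. One direction, $r_2 \leq \rank(A_2 \mid \transpose{K_1})$, is immediate: since $\transpose{C} = \transpose{K_1} \cdot \transpose{L_1}$, each column of $\transpose{C}$ is an $\naturals$-combination of columns of $\transpose{K_1}$, so any factorization of $(A_2 \mid \transpose{K_1})$ restricts to one of $(A_2 \mid \transpose{C})$ through the same inner dimension. The reverse inequality is the delicate point, because over $\MatN$ one cannot freely invert $\transpose{L_1}$ to recover $\transpose{K_1}$ from $\transpose{C}$. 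To handle it I would appeal to Lemma~\ref{lemma:min-cut-is-rank}: a minimum-cut factorization $(A_2 \mid \transpose{C}) = U \cdot V$ of width $r_2$ should lift to a factorization of $(A_2 \mid \transpose{K_1})$ of width $r_2$, using that the rank-minimality of the first decomposition forces $K_1$ to be the tightest coordinate representation of $C$ relative to the basis $L_1$ — so any min-cut witnessing the rank of $\transpose{C}$ simultaneously witnesses the rank of $\transpose{K_1}$. Turning this informal lifting into a precise combinatorial argument on min-cuts in $\Bialg$ is the step I expect to require the most care.
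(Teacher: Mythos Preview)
Your chained construction is correct and a genuine alternative to the paper's route. The paper proceeds symmetrically: it takes \emph{independent} rank decompositions \((A_i \mid C_i) = L_i \cdot (N_i \mid K_i)\) with \(C_1 = C\), \(C_2 = \transpose{C}\), then further rank-factors each \(K_i = P_i \cdot Q_i\), obtaining two full-rank factorizations of the same matrix \(C = (L_1 P_1)\cdot Q_1 = \transpose{Q_2}\cdot(\transpose{P_2}\,\transpose{L_2})\). It then invokes the uniqueness of full-rank factorization up to an invertible change of basis (this is what the reference behind \Cref{lemma:min-cut-is-rank} actually supplies) to obtain an invertible \(R\) with \(L_1 P_1 = \transpose{Q_2}\,R\), and sets \(S \defn P_1\, R^{-1}\, \transpose{P_2}\). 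Your sequential construction sidesteps this reconciliation entirely, at the price of having to verify that the second factorization has the right inner dimension.

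On that verification: you have correctly located the only nontrivial step, but your proposed attack --- a combinatorial min-cut lifting in \(\Bialg\) over \(\naturals\) --- is both harder than necessary and unlikely to succeed as stated. The inequality \(\rank(A_2 \mid \transpose{K_1}) \leq r_2\) is the \emph{same} field-level linear algebra that the paper leans on for its invertible \(R\): since \(L_1\) arises from a rank factorization it has full column rank and hence a left inverse \(L_1^{+}\); then \(K_1 = L_1^{+}\, C\), so \(\transpose{K_1} = \transpose{C}\,\transpose{(L_1^{+})}\), which puts every column of \(\transpose{K_1}\) in the column span of \(\transpose{C}\) and gives the inequality immediately. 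If one insists on working strictly over \(\naturals\) then your worry is legitimate, but the paper's invertible \(R\) is equally unavailable there; both arguments implicitly use field behaviour. With this one-line fix your proof is complete and arguably tidier than the paper's.
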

Once we have performed the cuts in \Cref{fig:mwd-rwd-upper-bound} on the right, we have changed the boundaries of the induced subgraphs.
This means that we cannot apply the inductive hypothesis right away, but we need to transform first the recursive rank decompositions of the old subgraphs into decompositions of the new ones, as shown in \Cref{lemma:rank-on-boundary}.
More explicitly, when \(M\) has full rank, if we have a recursive rank decomposition of \(\Gamma = \danglinggraph{G}{B' \cdot M}\), which corresponds to \(g\) below left, we can obtain one of \(\Gamma' = \danglinggraph{G}{B'}\), which corresponds to \(g'\) below right, of the same width.
\[\lemmarankonboundaryIdeaFig{}\]
\begin{lemma}\label{lemma:rank-on-boundary}
  Let \(T\) be a recursive rank decomposition of \(\Gamma = \danglinggraph{G}{B}\) and \(B = B' \cdot M\), with \(M\) that has full rank.
  Then, there is a recursive rank decomposition \(T'\) of \(\Gamma' = \danglinggraph{G}{B'}\) such that \(\decwidth(T) = \decwidth(T')\) and such that \(T\) and \(T'\) have the same underlying tree structure.
\end{lemma}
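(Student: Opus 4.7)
The plan is to construct $T'$ by keeping the same underlying tree structure as $T$ and adjusting only the boundary matrices stored at each node. Formally, if $T = \nodegenerator{T_1}{\Gamma}{T_2}$ with vertex partition $G = \begin{psmallmatrix} G_1 & C \\ \zeromat & G_2 \end{psmallmatrix}$, I would slice $B' = \begin{psmallmatrix} A'_1 \\ A'_2 \end{psmallmatrix}$ along the \emph{same} partition of rows and set the new children boundaries to be $B'_1 \defn (A'_1 \mid C)$ and $B'_2 \defn (A'_2 \mid \transpose{C})$, then recurse on the subtrees. Because the partitions and the internal blocks $C$ chosen along $T$ depend only on $G$, this produces a valid recursive rank decomposition of $\Gamma' = \danglinggraph{G}{B'}$ whose underlying tree is identical to that of $T$.

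To establish $\decwidth(T) = \decwidth(T')$ it then suffices to check that the boundary ranks at every pair of corresponding subtrees coincide. By \Cref{lemma:closed-expr-boundary}, for a subtree $T''$ of $T$ rooted at $\Gamma'' = \danglinggraph{G''}{B''}$ one has $\rank(B'') = \rank(A'' \mid \transpose{C_L} \mid C_R)$, where $A''$ is the submatrix of $B$ indexed by the vertices of $\Gamma''$ and $C_L, C_R$ come from $G$ alone; the corresponding subtree in $T'$ yields $\rank(A''' \mid \transpose{C_L} \mid C_R)$, with $A'''$ the analogous submatrix of $B'$ and with identical blocks $C_L, C_R$ since $G$ is unchanged. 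From $B = B' \cdot M$ one extracts $A'' = A''' \cdot M$ by restricting to the relevant rows, whence
\[
  (A'' \mid \transpose{C_L} \mid C_R) \;=\; (A''' \mid \transpose{C_L} \mid C_R) \cdot \begin{psmallmatrix} M & \zeromat & \zeromat \\ \zeromat & I & \zeromat \\ \zeromat & \zeromat & I \end{psmallmatrix}.
\]
The block-diagonal factor on the right inherits full rank from $M$, so multiplication by it preserves rank; this gives equality of the two boundary ranks at every corresponding pair of subtrees and hence $\decwidth(T) = \decwidth(T')$.

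The main subtlety is to pin down what ``$M$ has full rank'' should mean over $\MatN$: what the argument actually needs is that $M$ admits a one-sided inverse (over $\mathbb{Q}$ suffices) so that $\rank(X \cdot M) = \rank(X)$ for every $X$ of matching dimension. Granting this reading, a one-sided inverse of $M$ lifts to one of the block-diagonal matrix above simply by padding with identity blocks, and rank is preserved at each node; the whole statement then follows by a straightforward induction on the tree structure of $T$.
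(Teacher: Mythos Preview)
Your proposal is correct and follows essentially the same construction as the paper: both keep the tree structure of $T$ and replace each outer boundary block $A_i$ by the corresponding slice $A'_i$ of $B'$, then verify width equality via the observation that the old and new boundaries at each node differ by right-multiplication with a full-rank block-diagonal matrix $\begin{psmallmatrix} M & \zeromat \\ \zeromat & I \end{psmallmatrix}$. The only cosmetic difference is that the paper folds this rank check into the same induction, whereas you invoke \Cref{lemma:closed-expr-boundary} once to compare all subtrees simultaneously; the underlying argument is identical, and your reading of ``full rank'' (a one-sided inverse so that $\rank(X\cdot M)=\rank(X)$) is exactly what both proofs use.
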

With the above ingredients, we can show that rank width bounds monoidal width from above.
\begin{proposition}\label{prop:mwd-rwd-upper-bound}
  Let \(\Gamma = \danglinggraph{G}{B}\) be a graph with dangling edges and \(g \colon n \to 0\) be the morphism in \(\propGraph\) corresponding to \(\Gamma\).
  Let \(T\) be a recursive rank decomposition of \(\Gamma\).
  Then, there is a monoidal decomposition \(\rTomdec(T)\) of \(g\) such that \(\decwidth(\rTomdec(T)) \leq 2 \cdot \decwidth(T)\).
\end{proposition}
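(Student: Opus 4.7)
My plan is to proceed by induction on the structure of the recursive rank decomposition $T$, following the strategy already sketched in the preamble and illustrated in \Cref{fig:mwd-rwd-upper-bound}.

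\textbf{Base case.} If $T = \leafgenerator{\Gamma}$, then $\Gamma$ has at most one vertex and $\decwidth(T) = \rank(B)$. The corresponding morphism $g$ is a single vertex (or the empty graph) connected to the $n$ boundary ports via the matrix $B$. I build $\rTomdec(T)$ by decomposing $B$ (viewed as a morphism in $\Bialg$) using \Cref{th:mwd-matrices}, obtaining a decomposition of width $\leq \rank(B)+1$, and then composing with the atomic vertex of weight $1$. This yields the required bound whenever $\decwidth(T) \geq 1$; the degenerate cases (empty or completely isolated graph) are handled by direct inspection.

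\textbf{Inductive step.} Write $T = \nodegenerator{T_1}{\Gamma}{T_2}$, so that the partition of $\vertices(\Gamma)$ yields subgraphs $\Gamma_i = \danglinggraph{G_i}{B_i}$ with $B_1 = (A_1 \mid C)$ and $B_2 = (A_2 \mid \transpose{C})$, and each $T_i$ is a recursive rank decomposition of $\Gamma_i$. The partition decomposition depicted on the left of \Cref{fig:mwd-rwd-upper-bound} naively cuts along all $k_1+k_2$ vertex-wires, which is too loose. To sharpen it, I apply \Cref{lemma:cut-along-ranks}, obtaining rank factorisations $B_i = L_i \cdot M_i$ sharing a common middle block $S$ that also encodes the cross-connection $C = L_1\,S\,\transpose{L_2}$. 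Pushing the factors $L_i$ across the cut, as in the right of \Cref{fig:mwd-rwd-upper-bound}, reduces the central cut to $r_1 + r_2$ wires, where $r_i = \rank(B_i)$.

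\textbf{Assembling the decomposition.} Above the central cut there is a morphism in $\Bialg$ of rank at most $r_1+r_2$, decomposable by \Cref{th:mwd-matrices}. Below the cut sit two disjoint subproblems, the graphs $\Gamma_i' = \danglinggraph{G_i}{L_i}$ whose boundaries are the rank-revealing factors $L_i$. Since $B_i = L_i \cdot M_i$ with $M_i$ of full rank by construction, \Cref{lemma:rank-on-boundary} transforms $T_i$ into a recursive rank decomposition $T_i'$ of $\Gamma_i'$ of the same width, and the inductive hypothesis then yields a monoidal decomposition of the corresponding morphism of width at most $2\,\decwidth(T_i') = 2\,\decwidth(T_i) \leq 2\,\decwidth(T)$. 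Because $r_i \leq \decwidth(T_i) \leq \decwidth(T)$ directly from the definition of the width of a recursive rank decomposition, the central cut contributes at most $r_1 + r_2 \leq 2\,\decwidth(T)$, and so does the width of $\rTomdec(T)$ as a whole.

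\textbf{Main obstacle.} The principal delicate point is the diagrammatic reassembly: verifying that the algebraic factorisations of \Cref{lemma:cut-along-ranks} faithfully correspond to pushing the $L_i$ across the central cut in $\propGraph$ and that the residual block $S$ correctly rebuilds the cross-connection $C$ via the bialgebra equations of \Cref{fig:bialgebra}. A secondary subtlety is controlling the width of the top matrix piece so that the $+1$ from \Cref{th:mwd-matrices} does not leak past the $2\,\decwidth(T)$ budget; this is precisely where the full-rank hypothesis of \Cref{lemma:rank-on-boundary} interacts with the $\tensor$-structure on the top matrix. Once those are certified, the cost accounting above is immediate.
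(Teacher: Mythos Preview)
Your inductive skeleton is the same as the paper's: induct on $T$, in the step use \Cref{lemma:cut-along-ranks} to factor $B_i = L_i M_i$, rewrite $g$ as a wiring morphism $b$ composed along $r_1+r_2$ with $g_1\tensor g_2$ where $g_i$ corresponds to $\Gamma'_i=\danglinggraph{G_i}{L_i}$, then use \Cref{lemma:rank-on-boundary} to convert $T_i$ to a decomposition of $\Gamma'_i$ of the same width and recurse. The final arithmetic $r_1+r_2\le 2\decwidth(T)$ is exactly what the paper does.

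Where you diverge is in the treatment of the leaves of the monoidal decomposition, and here you are making your life harder than necessary. In $\propGraph$ \emph{every} morphism is atomic, and its weight is the number of \emph{vertices}, not the boundary size. Hence in the base case the paper simply sets $\rTomdec(\leafgenerator{\Gamma})=\leafgenerator{g}$ with $\nodeweight(g)=1$; there is no need to invoke \Cref{th:mwd-matrices} on $B$. Likewise in the inductive step the ``top'' morphism $b$ contains no vertices at all, so $\nodeweight(b)=0$ and the paper takes it as a single atomic leaf; there is nothing to decompose and no ``$+1$'' to worry about. Your remark that $b$ lives in $\Bialg$ is in fact not quite right (it contains a cup, coming from the block $S$ that reconnects the two halves), but this does not matter once you realise $b$ can be taken atomic. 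With that simplification your ``main obstacle'' and ``secondary subtlety'' both evaporate, and the proof reduces to the width calculation you already gave.
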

\begin{proof}[Proof sketch]
  The proof proceeds by induction on \(T\).
  The base cases are easily checked and the inductive step relies on the decomposition of \(g\) in \Cref{fig:mwd-rwd-upper-bound}, which we can write thanks to \Cref{lemma:cut-along-ranks}.
  Applying the inductive hypothesis and \Cref{lemma:rank-on-boundary}, the width of this decomposition can be bounded by \(\max \{r_1 + r_2, 2 \cdot \decwidth(T_1),  2 \cdot \decwidth(T_2)\} \leq 2 \cdot \decwidth(T)\), where \(T = \nodegenerator{T_1}{\Gamma}{T_2}\).
\end{proof}
Proving the lower bound is similarly involved and follows a similar proof structure.
From a monoidal decomposition we construct inductively a recursive rank decomposition of bounded width.
The inductive step relative to composition nodes is the most involved and needs two additional lemmas, which allow us to transform recursive rank decompositions of the induced subgraphs into ones of two subgraphs that satisfy the conditions of \Cref{def:rec-rank-dec}.

Applying the inductive hypothesis gives us a recursive rank decomposition of \(\Gamma = \danglinggraph{G}{(L \mid R)}\), which is associated to \(g\) below left, and we need to construct one of \(\Gamma' \defn \danglinggraph{G + L \cdot F \cdot \transpose{L}}{(L \mid R + L \cdot (F + \transpose{F}) \cdot \transpose{P})}\), which is associated to \(f \dcomp g\) below right, of at most the same width.
\[\lemmawirestofutureIdeaFig{}\]
\begin{lemma}\label{lemma:wires-to-future}
  Let \(T\) be a recursive rank decomposition of \(\Gamma = \danglinggraph{G}{(L \mid R)}\).
  Let \(F \in \MatN(j,j)\), \(P \in \MatN(m,j)\) and define \(\Gamma' \defn \danglinggraph{G + L \cdot F \cdot \transpose{L}}{(L \mid R + L \cdot (F + \transpose{F}) \cdot \transpose{P})}\).
  Then, there is a recursive rank decomposition \(T'\) of \(\Gamma'\) of bounded width: \(\decwidth(T') \leq \decwidth(T)\).
\end{lemma}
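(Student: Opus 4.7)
The plan is to prove the statement by induction on $T$. The key structural observation is that the two child subgraphs of $\Gamma'$ arising from the natural splitting at the root are themselves wires-to-future transformations of the corresponding children of $\Gamma$; this lets the inductive hypothesis supply modified subtrees $T'_i$ of width at most $\decwidth(T_i)$.

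Concretely, at the root of $T$ with vertex partition $V_1, V_2$ and $G = \begin{psmallmatrix} G_1 & C \\ \zeromat & G_2 \end{psmallmatrix}$, $L = \begin{psmallmatrix} L_1 \\ L_2 \end{psmallmatrix}$, $R = \begin{psmallmatrix} R_1 \\ R_2 \end{psmallmatrix}$, a direct calculation using $\adjeqrel$ to choose an upper-triangular representative gives
\[
G + L F \transpose{L} \ \adjeqrel \ \begin{pmatrix} G_1 + L_1 F \transpose{L_1} & C + L_1 (F + \transpose{F}) \transpose{L_2} \\ \zeromat & G_2 + L_2 F \transpose{L_2} \end{pmatrix}.
\]
Hence the child subgraph $\Gamma'_1$ has adjacency $G_1 + L_1 F \transpose{L_1}$ and boundary $(L_1 \mid R_1 + L_1(F+\transpose{F})\transpose{P} \mid C + L_1(F+\transpose{F})\transpose{L_2})$, which is exactly the wires-to-future transformation of $\Gamma_1 = \danglinggraph{G_1}{(L_1 \mid R_1 \mid C)}$ with the same $F$ and with $P$ replaced by $\begin{psmallmatrix} P \\ L_2 \end{psmallmatrix}$. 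The symmetric statement holds for $\Gamma'_2$ (swap the roles of $L_1$ and $L_2$ and use $C^{\transpose{}}$ in the boundary). The inductive hypothesis then produces $T'_i$ of $\Gamma'_i$ with $\decwidth(T'_i) \leq \decwidth(T_i)$.

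Setting $T' = \nodegenerator{T'_1}{\Gamma'}{T'_2}$, one has $\decwidth(T') = \max\{\decwidth(T'_1), \decwidth(T'_2), \rank(B')\}$ where $B' = (L \mid R + L(F+\transpose{F})\transpose{P})$ is the boundary of $\Gamma'$. Since $B' = (L \mid R) \cdot \begin{psmallmatrix} I & (F+\transpose{F})\transpose{P} \\ 0 & I \end{psmallmatrix}$ with the right factor invertible, $\rank(B') = \rank(B)$, so $\decwidth(T') \leq \decwidth(T)$. The base case in which $\Gamma$ has at most one vertex is handled by the same invertible-factor argument at the leaf. The main obstacle is the algebraic bookkeeping around $\adjeqrel$: one must verify carefully that the chosen upper-triangular representative of $G + LF\transpose{L}$ delivers precisely the adjacency and between-block needed to recognise $\Gamma'_i$ as a wires-to-future transformation of $\Gamma_i$, with the updated parameter $\begin{psmallmatrix} P \\ L_{3-i} \end{psmallmatrix}$. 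Once this identification is made, the induction proceeds cleanly.
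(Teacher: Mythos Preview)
Your proposal is correct and follows essentially the same route as the paper's proof: induction on $T$, with the inductive step recognising each child $\Gamma'_i$ of $\Gamma'$ as the wires-to-future transformation of $\Gamma_i = \danglinggraph{G_i}{(L_i \mid R_i \mid C_i)}$ with the same $F$ and with $P$ replaced by $\begin{psmallmatrix} P \\ L_{3-i} \end{psmallmatrix}$, then invoking the inductive hypothesis and the factorisation $B' = B \cdot \begin{psmallmatrix} I & (F+\transpose{F})\transpose{P} \\ 0 & I \end{psmallmatrix}$ at the root. One small remark: since the ambient category is $\Mat_{\naturals}$, that triangular factor is not literally invertible there, but it has full rank (determinant $1$ over $\mathbb{Z}$), which is what you need; the paper accordingly states only $\rank(B') \leq \rank(B)$, though your equality is in fact correct.
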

In order to obtain the subgraphs of the desired shape we need to add some extra connections to the boundaries.
We have a recursive rank decomposition of \(\Gamma = \danglinggraph{G}{B}\), which corresponds to \(g\) below left, and we need one of \(\Gamma' = \danglinggraph{G}{B \cdot M}\), which corresponds to \(g'\) below right, of at most the same width.
\[\lemmaaddtoboundaryIdeaFig{}\]
The following result and its proof are very similar to \Cref{lemma:rank-on-boundary}.
\begin{lemma}\label{lemma:paste-to-boundary}
  Let \(T\) be a recursive rank decomposition of \(\Gamma = \danglinggraph{G}{B}\) and let \(B' = B \cdot M\).
  Then, there is a recursive rank decomposition \(T'\) of \(\Gamma' = \danglinggraph{G}{B'}\) such that \(\decwidth(T') \leq \decwidth(T)\) and such that \(T\) and \(T'\) have the same underlying tree structure.
  Moreover, if \(M\) has full rank, then \(\decwidth(T') = \decwidth(T)\).
\end{lemma}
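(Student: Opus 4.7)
The plan is to mirror the proof of \Cref{lemma:rank-on-boundary} almost verbatim, proceeding by induction on the tree structure of \(T\), and to read off the two claims (general inequality and full-rank equality) from the corresponding behaviour of rank under right-multiplication.

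For the base case, \(T = \leafgenerator{\Gamma}\) with \(\Gamma\) having at most one vertex. I would set \(T' \defn \leafgenerator{\Gamma'}\), which is again a valid recursive rank decomposition since \(\Gamma'\) has the same vertex set as \(\Gamma\). The width is then \(\decwidth(T') = \rank(B') = \rank(B \cdot M) \leq \rank(B) = \decwidth(T)\), with equality whenever \(M\) has full (column) rank, using the standard fact that right-multiplication by a matrix with a left inverse preserves rank.

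For the inductive step, write \(T = \nodegenerator{T_1}{\Gamma}{T_2}\) with subdecompositions \(T_i\) of \(\Gamma_i = \danglinggraph{G_i}{B_i}\) obtained as in \Cref{def:rec-rank-dec}, so that \(B_1 = (A_1 \mid C)\), \(B_2 = (A_2 \mid \transpose{C})\), and \(B = \begin{psmallmatrix} A_1 \\ A_2 \end{psmallmatrix}\). The key observation is that the transformation only touches the part of each boundary that comes from \(B\) and leaves the connecting matrix \(C\) alone. Concretely, setting \(M_i \defn \begin{psmallmatrix} M & \zeromat \\ \zeromat & \id{} \end{psmallmatrix}\), we have \(B_1' \defn (A_1 M \mid C) = B_1 \cdot M_1\) and \(B_2' \defn (A_2 M \mid \transpose{C}) = B_2 \cdot M_2\), exactly the right shape so that the induction hypothesis applies to each \(T_i\) and yields recursive rank decompositions \(T_i'\) of \(\danglinggraph{G_i}{B_i'}\) with \(\decwidth(T_i') \leq \decwidth(T_i)\) and with the same underlying tree shape. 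Take \(T' \defn \nodegenerator{T_1'}{\Gamma'}{T_2'}\); the block decomposition of \(G\) is reused unchanged, so the conditions of \Cref{def:rec-rank-dec} are still satisfied for \(\Gamma'\). Combining the bound at the root, \(\rank(B') = \rank(B \cdot M) \leq \rank(B)\), with the inductive bounds on the children gives \(\decwidth(T') \leq \decwidth(T)\).

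For the moreover clause, if \(M\) has full (column) rank, then each \(M_i\), being block-diagonal with \(M\) and an identity on the diagonal, also has full column rank. Hence at every node in the recursion the rank is preserved rather than merely bounded, so by induction \(\decwidth(T_i') = \decwidth(T_i)\) and \(\rank(B') = \rank(B)\), yielding \(\decwidth(T') = \decwidth(T)\). The fact that \(T\) and \(T'\) share an underlying tree structure is immediate from the construction, since the recursion only rewrites the boundary data stored at each node and never alters the shape of the decomposition tree.

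The only subtle point, and the step I would single out as requiring care, is checking that the modified subgraphs \(\danglinggraph{G_i}{B_i'}\) actually fit together into a recursive rank decomposition of \(\Gamma'\) in the sense of \Cref{def:rec-rank-dec}: this amounts to verifying that the connecting block \(C\) and the block-partition of \(G\) carry over unchanged and that the ``boundary plus connection to the other side'' form \(B_i' = (A_i' \mid C^{(\ast)})\) is preserved, which is precisely what the block-diagonal choice of \(M_i\) guarantees. All remaining manipulations are purely routine rank-of-product inequalities.
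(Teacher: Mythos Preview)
Your argument is essentially the paper's proof: the same induction on \(T\), the same block-diagonal matrices \(M_i = \begin{psmallmatrix} M & 0 \\ 0 & I \end{psmallmatrix}\) pushed to the children, and the same width comparison at each node. The one slip is in the ``moreover'' clause: for \(\rank(B \cdot M) = \rank(B)\) you need \(M\) to have full \emph{row} rank (equivalently a \emph{right} inverse), not full column rank with a left inverse --- e.g.\ with \(B = I_2\) and \(M = \begin{psmallmatrix}1\\0\end{psmallmatrix}\) (which has a left inverse) one gets \(\rank(BM) = 1 < 2\). With ``column'' replaced by ``row'' and ``left'' by ``right'', your proof coincides with the paper's.
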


\begin{proposition}\label{prop:mwd-rwd-lower-bound}
  Let \(g = \fullboundariesgraph{G}{L}{R}{P}{F}\) in \(\propGraph\) and \(d \in \decset{g}\).
  Let \(\Gamma = \danglinggraph{G}{(L \mid R)}\).
  Then, there is a recursive rank decomposition \(\mTordec(d)\) of \(\Gamma\) s.t.
  \(\decwidth(\mTordec(d)) \leq 2 \cdot \max \{\decwidth(d), \rank(L), \rank(R)\}\).
\end{proposition}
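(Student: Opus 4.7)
The plan is structural induction on $d$, building $\mTordec(d)$ bottom-up in a way that mirrors the proof of Proposition~\ref{prop:mwd-rwd-upper-bound}. In the base case $d = \leafgenerator{g}$, the weight $\decwidth(d)$ equals the number $k$ of vertices of $g$, and any recursive rank decomposition $T$ of $\Gamma$ obtained by peeling vertices one at a time has each subgraph boundary with at most $k$ rows and hence rank at most $k$; so $\decwidth(T) \leq \decwidth(d)$. For the tensor case $d = \nodegenerator{d_1}{\tensor}{d_2}$ with $g = g_1 \tensor g_2$, the adjacency of $g$ is block-diagonal with vanishing connection matrix $C = \zeromat$ and the boundaries concatenate block-diagonally. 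I would apply the inductive hypothesis to each $d_i$ to obtain $T_i \in T_{\Gamma_i}$ for $\Gamma_i = \danglinggraph{G_i}{(L_i \mid R_i)}$, and set $\mTordec(d) \defn \nodegenerator{T_1}{\Gamma}{T_2}$; the subgraph boundaries $(A_i \mid \zeromat)$ have the same rank as $A_i$, coinciding with those used by the IH. Using $\rank(L_i) \leq \rank(L)$ and $\rank(R_i) \leq \rank(R)$ for the block-diagonal matrices, together with $\rank(L\mid R) \leq \rank(L) + \rank(R) \leq 2\max\{\rank(L),\rank(R)\}$ at the root, the target bound is preserved.

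The composition case $d = \nodegenerator{d_1}{\dcomp_j}{d_2}$ with $g = g_1 \dcomp g_2$ is the crux of the argument. The vertex set $V = V_1 \sqcup V_2$ of $\Gamma$ splits as expected, but the adjacency and boundary of $\Gamma$ are not those of $\Gamma_1$ and $\Gamma_2$ simply glued: the $j$ middle wires together with the passing-wire matrices $P_1, P_2$ and feedback matrices $F_1, F_2$ inject additive terms of the form $R_1(\cdot)\transpose{R_1}$ into the diagonal adjacency blocks, a cross-connection matrix $C$ built from $R_1(\cdot)\transpose{L_2}$ between $V_1$ and $V_2$, and analogous modifications of the outer boundary. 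The plan is to first apply the IH to each $d_i$ to obtain $T_i \in T_{\Gamma_i}$ for $\Gamma_i = \danglinggraph{G_i}{(L_i \mid R_i)}$, treating the middle as part of $\Gamma_i$'s boundary; then to use Lemma~\ref{lemma:wires-to-future}, instantiated with the feedback and passing-wire data of the composition, to transform each $T_i$ into a recursive rank decomposition of a graph whose adjacency matches the corresponding subgraph of $\Gamma$; and finally to use Lemma~\ref{lemma:paste-to-boundary} to reshape each boundary into the form $(A_1 \mid C)$ or $(A_2 \mid \transpose{C})$ dictated by Definition~\ref{def:rec-rank-dec}. Both lemmas preserve width and the underlying tree shape, so assembling $\mTordec(d) \defn \nodegenerator{T_1'}{\Gamma}{T_2'}$ yields a valid recursive rank decomposition of $\Gamma$ whose only new contribution comes from the root, namely $\rank(L\mid R) \leq 2\max\{\rank(L),\rank(R)\}$.

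The main obstacle is the rank bookkeeping in this composition case. The middle ranks $\rank(R_1)$ and $\rank(L_2)$ are bounded by $j \leq \decwidth(d)$ and cause no trouble, but the outer-boundary ranks $\rank(L_1)$ and $\rank(R_2)$ that appear inside the IH bound for $T_i$ include potential contributions from paths routed through the middle, themselves of rank at most $j$. Obtaining the factor of $2$ in the statement --- rather than the factor of $4$ that a naive application of the IH would produce --- requires the feedback absorption of Lemma~\ref{lemma:wires-to-future} and the boundary reshaping of Lemma~\ref{lemma:paste-to-boundary} to be composed so that these through-the-middle contributions are gathered into the single $\rank(L\mid R)$ term paid for at the root, rather than compounded at every level of the induction.
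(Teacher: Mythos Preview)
Your plan matches the paper's proof: structural induction on \(d\), with the composition case handled by the inductive hypothesis followed by Lemma~\ref{lemma:wires-to-future} and Lemma~\ref{lemma:paste-to-boundary}, and the tensor case by the inductive hypothesis and Lemma~\ref{lemma:paste-to-boundary} alone. One refinement worth noting: the composition law in \(\catGraph\) is asymmetric---the explicit formulas give \(\overline{G}_1 = G_1\) and \(\overline{L}_1 = L_1\) unchanged, while only \(\overline{G}_2 = G_2 + L_2 F_1 \transpose{L_2}\) and \(\overline{R}_2\) acquire feedback terms---so the paper applies Lemma~\ref{lemma:wires-to-future} only to \(\Gamma_2\), and the rank bookkeeping is then more direct than you anticipate, using \(\rank(L_1)\leq\rank(L)\), \(\rank(R_2)\leq\rank(R)\), and \(\rank(R_1),\rank(L_2)\leq j\) rather than any subtle gathering of through-the-middle contributions.
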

\begin{proof}[Proof sketch]
  The proof proceeds by induction on \(d\).
  The base case is easily checked, while the inductive steps are a bit more involved.
  If \(d = \nodegenerator{d_1}{\dcomp_j}{d_2}\), then there are \(g_i = \fullboundariesgraph{G_i}{L_i}{R_i}{P_i}{F_i}\) such that \(g = g_1 \dcomp g_2\) and we can write \(g\) as follows.
  \[\mwdrankwidthlowerproofFigFuture{}\]
  In order to build a recursive rank decomposition of \(\Gamma\), we need recursive rank decompositions of \(\overline{\Gamma}_i = \danglinggraph{\overline{G}_i}{\overline{B}_i}\), but we can obtain recursive rank decompositions of \(\Gamma_i = \danglinggraph{G_i}{(L_i \mid R_i)}\) by applying only induction.
  Thanks to \Cref{lemma:wires-to-future}, we obtain a recursive rank decomposition of \(\Gamma'_2 = \danglinggraph{G_2 + L_2 \cdot F_1 \cdot \transpose{L_2}}{(L_2 \mid R_2 + L_2 \cdot (F_1 + \transpose{F_1}) \cdot \transpose{P_2})}\).
  Lastly, we apply \Cref{lemma:paste-to-boundary} to get recursive rank decompositions \(T_i\) of \(\overline{\Gamma}_i\).
  Thanks to these, we can bound the width of \(T \defn \nodegenerator{T_1}{\Gamma}{T_2}\):
  \[\decwidth(T) \leq 2 \cdot \max \{\decwidth(d_1), \decwidth(d_2), j, \rank(L), \rank(R)\} \codefn 2 \cdot \max \{\decwidth(d), \rank(L), \rank(R)\}.\]
  If \(d = \nodegenerator{d_1}{\tensor}{d_2}\), we proceed similarly.
\end{proof}

From \Cref{prop:mwd-rwd-upper-bound}, \Cref{prop:mwd-rwd-lower-bound} and \Cref{th:recursive-rwd}, we obtain our main result.

\begin{theorem}\label{th:mwd-rwd}
  Let \(G\) be a graph and let \(g = \fullboundariesgraph{G}{\initmap}{\initmap}{\emptymat}{\emptymat}\) be the corresponding morphism of \(\propGraph\).
  Then, \(\frac{1}{2} \cdot \rankwidth(G) \leq \mwd(g) \leq 2 \cdot \rankwidth(G)\).
\end{theorem}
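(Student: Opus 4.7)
The plan is to obtain \Cref{th:mwd-rwd} as a direct consequence of the two technical propositions \Cref{prop:mwd-rwd-upper-bound} and \Cref{prop:mwd-rwd-lower-bound} together with the equivalence between rank width and recursive rank width established in \Cref{th:recursive-rwd}. The key simplification that makes the additive error terms in those statements vanish is that for \(g = \fullboundariesgraph{G}{\initmap}{\initmap}{\emptymat}{\emptymat}\) we have \(L = R = \initmap\), and so the associated graph with dangling edges \(\Gamma = \danglinggraph{G}{(\initmap \mid \initmap)}\) has a boundary matrix \(B\) of rank \(0\), and correspondingly \(\rank(L) = \rank(R) = 0\).

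For the upper bound \(\mwd(g) \leq 2 \cdot \rankwidth(G)\), I would start from an optimal rank decomposition \((Y,r)\) of \(G\) with \(\decwidth(Y,r) = \rankwidth(G)\). Applying \Cref{prop:recursive-rwd-upper-bound} to \(\Gamma\) produces a recursive rank decomposition \(T \defn \toRecursiveDec(Y,r)\) of \(\Gamma\) satisfying \(\decwidth(T) \leq \decwidth(Y,r) + \rank(B) = \rankwidth(G)\), since \(B\) is empty. Feeding \(T\) into \Cref{prop:mwd-rwd-upper-bound} yields a monoidal decomposition \(\rTomdec(T) \in \decset{g}\) with \(\decwidth(\rTomdec(T)) \leq 2 \cdot \decwidth(T) \leq 2 \cdot \rankwidth(G)\). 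Taking the minimum over decompositions gives \(\mwd(g) \leq 2 \cdot \rankwidth(G)\).

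For the lower bound \(\tfrac{1}{2} \cdot \rankwidth(G) \leq \mwd(g)\), I would pick an optimal monoidal decomposition \(d \in \decset{g}\) with \(\decwidth(d) = \mwd(g)\). By \Cref{prop:mwd-rwd-lower-bound}, applied with \(L = R = \initmap\), there is a recursive rank decomposition \(\mTordec(d)\) of \(\Gamma\) with
\[\decwidth(\mTordec(d)) \leq 2 \cdot \max\{\decwidth(d), \rank(L), \rank(R)\} = 2 \cdot \mwd(g).\]
Then \Cref{prop:recursive-rwd-lower-bound} converts this into a genuine rank decomposition \(\fromRecursiveDec(\mTordec(d))\) of \(G\) whose width is bounded above by \(\decwidth(\mTordec(d)) \leq 2 \cdot \mwd(g)\). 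Minimising over rank decompositions gives \(\rankwidth(G) \leq 2 \cdot \mwd(g)\), which rearranges to the desired inequality.

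There is no real obstacle at this stage: all the substantive content has been carried out in \Cref{prop:mwd-rwd-upper-bound} and \Cref{prop:mwd-rwd-lower-bound}, and the only thing to check is the bookkeeping that the rank of the empty boundary is zero so that the error terms \(\rank(B)\), \(\rank(L)\), \(\rank(R)\) in the intermediate statements all collapse. The factor \(2\) on each side is inherited directly from the two propositions and is not tightened here.
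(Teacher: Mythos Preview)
Your proposal is correct and matches the paper's own argument: the theorem is stated as an immediate consequence of \Cref{prop:mwd-rwd-upper-bound}, \Cref{prop:mwd-rwd-lower-bound} and \Cref{th:recursive-rwd}, and your write-up simply unfolds this chain explicitly, correctly noting that the empty boundary makes the additive \(\rank(B)\), \(\rank(L)\), \(\rank(R)\) terms vanish.
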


\section{Conclusions and future work}
We have shown that monoidal width, in a suitable category of graphs composable along \virgolette{open} edges, yields rank width; a well-known measure from the graph theory literature.

Our goal with this line of research is to develop a generic, abstract ``decomposition theory''.
We will study other graph widths like clique width~\cite{courcelle2000upper-clique} and twin width~\cite{bonnet2020twin}, as well as go beyond graphs: e.g.\ by focussing on tree width for hypergraphs and relational structures~\cite{abramsky2017pebbling}, branch width for matroids and widths for directed graphs.
A part of \virgolette{decomposition theory} means going beyond width as a mere number -- in fact we believe that in each case the identification of a suitable monoidal category as an \emph{algebra} of open graph structures is itself a worthwhile contribution. Indeed, having such an algebra means that a decomposition, rather than an ad hoc concept-specific construction, becomes more of a mathematical object in its own right.  Such compositional algebras will add to the quiver of compositional structures of applied category theory; for example serving as syntax for more sophisticated applications~\cite{networkGamesCSL}.


\paragraph*{Acknowledgements.}
Elena Di Lavore and {Pawe\l} Sobociński were supported by the European Union through the ESF funded Estonian IT Academy research measure (2014-2020.4.05.19-0001). This work was also supported by the Estonian Research Council grant PRG1210.
\bibliographystyle{eptcs}
\bibliography{mwd-biblio.bib}

\begin{thebibliography}{10}

\bibitem{abramsky2017pebbling}
Samson Abramsky, Anuj Dawar, and Pengming Wang.
\newblock The pebbling comonad in finite model theory.
\newblock In {\em 2017 32nd Annual ACM/IEEE Symposium on Logic in Computer
  Science (LICS)}, pages 1--12. IEEE, 2017.
\newblock \href {https://doi.org/10.1103/PhysRevLett.65.3373}
  {\path{doi:10.1103/PhysRevLett.65.3373}}.

\bibitem{adolphson1973optimal}
D.~Adolphson and T.~C. Hu.
\newblock Optimal linear ordering.
\newblock {\em SIAM Journal on Applied Mathematics}, 25(3):403--423, 1973.
\newblock \href {https://doi.org/10.1137/0123021} {\path{doi:10.1137/0123021}}.

\bibitem{aharonov2006quantum}
Dorit Aharonov, Zeph Landau, and Johann Makowsky.
\newblock The quantum fft can be classically simulated, 2006.
\newblock \href {http://arxiv.org/abs/quant-ph/0611156}
  {\path{arXiv:quant-ph/0611156}}.

\bibitem{arnborg1993algebraic}
Stefan Arnborg, Bruno Courcelle, Andrzej Proskurowski, and Detlef Seese.
\newblock An algebraic theory of graph reduction.
\newblock {\em Journal of the ACM (JACM)}, 40(5):1134--1164, 1993.
\newblock \href {https://doi.org/10.1016/0095-8956(91)90061-N}
  {\path{doi:10.1016/0095-8956(91)90061-N}}.

\bibitem{bauderon1987graph}
Michel Bauderon and Bruno Courcelle.
\newblock Graph expressions and graph rewritings.
\newblock {\em Mathematical Systems Theory}, 20(1):83--127, 1987.
\newblock \href {https://doi.org/10.1007/BF01692060}
  {\path{doi:10.1007/BF01692060}}.

\bibitem{bertele1973treewidth}
Umberto Bertelè and Francesco Brioschi.
\newblock On non-serial dynamic programming.
\newblock {\em J. Comb. Theory, Ser. A}, 14(2):137--148, 1973.
\newblock \href {https://doi.org/10.1016/0097-3165(73)90016-2}
  {\path{doi:10.1016/0097-3165(73)90016-2}}.

\bibitem{blume2011treewidth}
Christoph Blume, HJ~Sander Bruggink, Martin Friedrich, and Barbara K{\"o}nig.
\newblock Treewidth, pathwidth and cospan decompositions.
\newblock {\em Electronic Communications of the EASST}, 41, 2011.

\bibitem{bodlaender1992tourist}
Hans~L Bodlaender.
\newblock A tourist guide through treewidth.
\newblock {\em Technical report}, 1992.

\bibitem{bodlaender2008combinatorial}
Hans~L Bodlaender and Arie~MCA Koster.
\newblock Combinatorial optimization on graphs of bounded treewidth.
\newblock {\em The Computer Journal}, 51(3):255--269, 2008.
\newblock \href {https://doi.org/10.1093/comjnl/bxm037}
  {\path{doi:10.1093/comjnl/bxm037}}.

\bibitem{Boisseau2021}
Guillaume Boisseau and {Pawe\l} {Soboci\'nski}.
\newblock String diagrammatic electrical circuit theory.
\newblock {\em CoRR}, abs/2106.07763, 2021.
\newblock URL: \url{https://arxiv.org/abs/2106.07763}, \href
  {http://arxiv.org/abs/2106.07763} {\path{arXiv:2106.07763}}.

\bibitem{Bonchi0Z21}
Filippo Bonchi, {Pawe\l} {Soboci\'nski}, and Fabio Zanasi.
\newblock A survey of compositional signal flow theory.
\newblock In Michael Goedicke, Erich~J. Neuhold, and Kai Rannenberg, editors,
  {\em Advancing Research in Information and Communication Technology - IFIP's
  Exciting First 60+ Years, Views from the Technical Committees and Working
  Groups}, volume 600 of {\em {IFIP} Advances in Information and Communication
  Technology}, pages 29--56. Springer, 2021.
\newblock \href {https://doi.org/10.1007/978-3-030-81701-5\_2}
  {\path{doi:10.1007/978-3-030-81701-5\_2}}.

\bibitem{bonnet2020twin}
{\'E}douard Bonnet, Eun~Jung Kim, St{\'e}phan Thomass{\'e}, and R{\'e}mi
  Watrigant.
\newblock Twin-width i: tractable fo model checking.
\newblock In {\em 2020 IEEE 61st Annual Symposium on Foundations of Computer
  Science (FOCS)}, pages 601--612. IEEE, 2020.
\newblock \href {https://doi.org/10.1109/FOCS.2015.63}
  {\path{doi:10.1109/FOCS.2015.63}}.

\bibitem{bumpus2021spined}
Benjamin~Merlin Bumpus and Zoltan~A Kocsis.
\newblock Spined categories: generalizing tree-width beyond graphs.
\newblock {\em arXiv preprint arXiv:2104.01841}, 2021.

\bibitem{chantawibul2015compositionalgraphtheory}
Apiwat Chantawibul and Pawe{\l} Soboci{\'n}ski.
\newblock Towards compositional graph theory.
\newblock {\em Electronic Notes in Theoretical Computer Science}, 319:121--136,
  2015.
\newblock \href {https://doi.org/10.1016/j.entcs.2015.12.009}
  {\path{doi:10.1016/j.entcs.2015.12.009}}.

\bibitem{cho2019}
Kenta Cho and Bart Jacobs.
\newblock Disintegration and {{Bayesian Inversion}} via {{String Diagrams}}.
\newblock {\em Mathematical Structures in Computer Science}, pages 1--34, March
  2019.
\newblock \href {http://arxiv.org/abs/1709.00322} {\path{arXiv:1709.00322}},
  \href {https://doi.org/10.1017/S0960129518000488}
  {\path{doi:10.1017/S0960129518000488}}.

\bibitem{chudnovsky2011well}
Maria Chudnovsky and Paul Seymour.
\newblock A well-quasi-order for tournaments.
\newblock {\em Journal of Combinatorial Theory, Series B}, 101(1):47--53, 2011.
\newblock \href {https://doi.org/10.1016/j.jctb.2010.10.003}
  {\path{doi:10.1016/j.jctb.2010.10.003}}.

\bibitem{Coecke2017}
Bob Coecke and Aleks Kissinger.
\newblock {\em Picturing Quantum Processes - A first course in Quantum Theory
  and Diagrammatic Reasoning}.
\newblock Cambridge University Press, 2017.
\newblock \href {https://doi.org/10.1017/9781316219317}
  {\path{doi:10.1017/9781316219317}}.

\bibitem{Comfort2021}
Cole Comfort and Aleks Kissinger.
\newblock A graphical calculus for lagrangian relations.
\newblock {\em CoRR}, abs/2105.06244, 2021.
\newblock URL: \url{https://arxiv.org/abs/2105.06244}, \href
  {http://arxiv.org/abs/2105.06244} {\path{arXiv:2105.06244}}.

\bibitem{courcelle1990monadic}
Bruno Courcelle.
\newblock The monadic second-order logic of graphs. i. recognizable sets of
  finite graphs.
\newblock {\em Information and computation}, 85(1):12--75, 1990.
\newblock \href {https://doi.org/10.1016/0890-5401(90)90043-H}
  {\path{doi:10.1016/0890-5401(90)90043-H}}.

\bibitem{courcelle2000upper-clique}
Bruno Courcelle and Stephan Olariu.
\newblock Upper bounds to the clique width of graphs.
\newblock {\em Discrete Applied Mathematics}, 101(1-3):77--114, 2000.
\newblock \href {https://doi.org/10.1016/S0166-218X(99)00184-5}
  {\path{doi:10.1016/S0166-218X(99)00184-5}}.

\bibitem{networkGamesCSL}
Elena Di~Lavore, Jules Hedges, and Pawe{\l} Soboci{\'n}ski.
\newblock Compositional modelling of network games.
\newblock In {\em 29th EACSL Annual Conference on Computer Science Logic (CSL
  2021)}. Schloss Dagstuhl-Leibniz-Zentrum f{\"u}r Informatik, 2021.

\bibitem{2022monoidalwidth}
Elena Di~Lavore and Pawe{\l} Soboci{\'n}ski.
\newblock {Monoidal Width: Unifying Tree Width, Path Width and Branch Width},
  2022.
\newblock \href {http://arxiv.org/abs/2202.07582} {\path{arXiv:2202.07582}}.

\bibitem{DuncanKPW20}
Ross Duncan, Aleks Kissinger, Simon Perdrix, and John van~de Wetering.
\newblock Graph-theoretic simplification of quantum circuits with the
  zx-calculus.
\newblock {\em Quantum}, 4:279, 2020.
\newblock \href {https://doi.org/10.22331/q-2020-06-04-279}
  {\path{doi:10.22331/q-2020-06-04-279}}.

\bibitem{feder1998computational}
Tom{\'a}s Feder and Moshe~Y Vardi.
\newblock The computational structure of monotone monadic snp and constraint
  satisfaction: A study through datalog and group theory.
\newblock {\em SIAM Journal on Computing}, 28(1):57--104, 1998.
\newblock \href {https://doi.org/10.1016/0196-6774(86)90023-4}
  {\path{doi:10.1016/0196-6774(86)90023-4}}.

\bibitem{fong2018seven}
Brendan Fong and David~I Spivak.
\newblock Seven sketches in compositionality: An invitation to applied category
  theory, 2018.
\newblock \href {http://arxiv.org/abs/1803.05316} {\path{arXiv:1803.05316}}.

\bibitem{fritz2020}
Tobias Fritz.
\newblock A synthetic approach to {Markov} kernels, conditional independence
  and theorems on sufficient statistics.
\newblock {\em Advances in Mathematics}, 370:107239, 2020.
\newblock \href {https://doi.org/10.1016/j.aim.2020.107239}
  {\path{doi:10.1016/j.aim.2020.107239}}.

\bibitem{GhaniHWZ18}
Neil Ghani, Jules Hedges, Viktor Winschel, and Philipp Zahn.
\newblock Compositional game theory.
\newblock In Anuj Dawar and Erich Gr{\"{a}}del, editors, {\em Proceedings of
  the 33rd Annual {ACM/IEEE} Symposium on Logic in Computer Science, {LICS}
  2018, Oxford, UK, July 09-12, 2018}, pages 472--481. {ACM}, 2018.
\newblock \href {https://doi.org/10.1145/3209108.3209165}
  {\path{doi:10.1145/3209108.3209165}}.

\bibitem{GhicaJL17}
Dan~R. Ghica, Achim Jung, and Aliaume Lopez.
\newblock Diagrammatic semantics for digital circuits.
\newblock In Valentin Goranko and Mads Dam, editors, {\em 26th {EACSL} Annual
  Conference on Computer Science Logic, {CSL} 2017, August 20-24, 2017,
  Stockholm, Sweden}, volume~82 of {\em LIPIcs}, pages 24:1--24:16. Schloss
  Dagstuhl - Leibniz-Zentrum f{\"{u}}r Informatik, 2017.
\newblock \href {https://doi.org/10.4230/LIPIcs.CSL.2017.24}
  {\path{doi:10.4230/LIPIcs.CSL.2017.24}}.

\bibitem{halin1976treewidth}
Rudolf Halin.
\newblock S-functions for graphs.
\newblock {\em Journal of geometry}, 8(1-2):171--186, 1976.
\newblock \href {https://doi.org/10.1007/BF01917434}
  {\path{doi:10.1007/BF01917434}}.

\bibitem{joyal1991geometry}
Andr{\'e} Joyal and Ross Street.
\newblock The geometry of tensor calculus, i.
\newblock {\em Advances in mathematics}, 88(1):55--112, 1991.
\newblock \href {https://doi.org/10.1016/0001-8708(91)90003-P}
  {\path{doi:10.1016/0001-8708(91)90003-P}}.

\bibitem{Lack2004a}
Stephen Lack.
\newblock Composing {PROPs}.
\newblock {\em Theor. App. Categories}, 13(9):147--163, 2004.

\bibitem{MacLane1965}
Saunders {Mac Lane}.
\newblock Categorical algebra.
\newblock {\em Bull.\ Amer.\ Math.\ Soc.}, 71:40--106, 1965.
\newblock \href {https://doi.org/10.2307/1993828} {\path{doi:10.2307/1993828}}.

\bibitem{oum2006rank-width}
Sang-il Oum and Paul~D. Seymour.
\newblock Approximating clique-width and branch-width.
\newblock {\em Journal of Combinatorial Theory, Series B}, 96(4):514--528,
  2006.
\newblock \href {https://doi.org/10.1016/j.jctb.2005.10.006}
  {\path{doi:10.1016/j.jctb.2005.10.006}}.

\bibitem{piziak1999fullRank}
R~Piziak and PL~Odell.
\newblock Full rank factorization of matrices.
\newblock {\em Mathematics magazine}, 72(3):193--201, 1999.
\newblock \href {https://doi.org/10.1080/0025570X.1999.11996730}
  {\path{doi:10.1080/0025570X.1999.11996730}}.

\bibitem{pudlak1988graph-complexity}
Pavel Pudl{\'a}k, Vojt{\v{e}}ch R{\"o}dl, and Petr Savick{\`y}.
\newblock Graph complexity.
\newblock {\em Acta Informatica}, 25(5):515--535, 1988.

\bibitem{rathke2014compositional}
Julian Rathke, Pawe{\l} Soboci{\'n}ski, and Owen Stephens.
\newblock Compositional reachability in petri nets.
\newblock In {\em International Workshop on Reachability Problems}, pages
  230--243. Springer, 2014.
\newblock \href {https://doi.org/10.1007/978-3-319-07734-5_9}
  {\path{doi:10.1007/978-3-319-07734-5_9}}.

\bibitem{robertson1983graph-minorsI}
Neil Robertson and Paul~D. Seymour.
\newblock Graph minors. {I}. excluding a forest.
\newblock {\em Journal of Combinatorial Theory, Series B}, 35(1):39--61, 1983.
\newblock \href {https://doi.org/10.1016/0095-8956(83)90079-5}
  {\path{doi:10.1016/0095-8956(83)90079-5}}.

\bibitem{robertson1986graph-minorsII}
Neil Robertson and Paul~D. Seymour.
\newblock Graph minors. {II}. algorithmic aspects of tree-width.
\newblock {\em Journal of algorithms}, 7(3):309--322, 1986.
\newblock \href {https://doi.org/10.1016/0196-6774(86)90023-4}
  {\path{doi:10.1016/0196-6774(86)90023-4}}.

\bibitem{robertson1991graph-minorsX}
Neil Robertson and Paul~D. Seymour.
\newblock Graph minors. {X}. obstructions to tree-decomposition.
\newblock {\em Journal of Combinatorial Theory, Series B}, 52(2):153--190,
  1991.
\newblock \href {https://doi.org/10.1016/0095-8956(91)90061-N}
  {\path{doi:10.1016/0095-8956(91)90061-N}}.

\bibitem{selinger2010survey}
Peter Selinger.
\newblock A survey of graphical languages for monoidal categories.
\newblock In {\em New structures for physics}, pages 289--355. Springer, 2010.
\newblock \href {https://doi.org/10.1090/conm/134/1187296}
  {\path{doi:10.1090/conm/134/1187296}}.

\bibitem{zanasi2015thesis}
Fabio Zanasi.
\newblock {\em Interacting Hopf Algebras - The Theory of Linear Systems}.
\newblock PhD thesis, \'Ecole Normale Sup\'erieure de Lyon, 2015.

\end{thebibliography}
\end{document}